\newtheorem{theorem}{Theorem}[section]
\newtheorem{lemma}[theorem]{Lemma}
\newtheorem{conjecture}[theorem]{Conjecture}
\newtheorem{proposition}[theorem]{Proposition}
\newtheorem{corollary}[theorem]{Corollary}
\newtheorem*{question*}{Question}
\theoremstyle{definition}
\newtheorem*{definition*}{Definition}
\newtheorem{definition}[theorem]{Definition}
\newtheorem*{example*}{Example}
\newtheorem{example}[theorem]{Example}
\newtheorem*{observation*}{Observation}
\newtheorem*{Goal*}{Goal}
\newtheorem*{Assumption*}{Assumption}
\theoremstyle{remark}
\newtheorem*{remark*}{Remark}
\newtheorem{remark}[theorem]{Remark}
\numberwithin{equation}{section}
\newcommand{\ow}{\omega}
\newcommand{\lda}{\lambda}
\newcommand{\p}{\partial}
\newcommand{\C}{{\mathbb{C}}}
\newcommand{\R}{{\mathbb{R}}}
\newcommand{\Z}{{\mathbb{Z}}}
\newcommand{\D}{{\mathbb{D}}}
\newcommand{\CO}{\mathcal{CO}}
\DeclareMathOperator{\im}{Im}
\DeclareMathOperator{\Fix}{Fix}
\DeclareMathOperator{\HW}{HW}
\DeclareMathOperator{\CZ}{CZ}
\DeclareMathOperator{\SH}{SH}
\DeclareMathOperator{\HF}{HF}
\DeclareMathOperator{\CF}{CF}
\DeclareMathOperator{\eHW}{eHW}
\DeclareMathOperator{\Ho}{H}
\begin{document}

\title[The systoles of symmetric convex hypersurfaces]{Remarks on the systoles of symmetric convex hypersurfaces and   symplectic capacities}

\author{Joontae Kim, Seongchan Kim, and Myeonggi Kwon}
 \address{School of Mathematics, Korea Institute for Advanced Study, 85 Hoegiro, Dongdaemun-gu, Seoul 02455, Republic of Korea}
 \email {joontae@kias.re.kr}

 \address{Institut de Math\'ematiques, Universit\'e de Neuch\^atel, Rue Emile-Argand 11, 2000 Neuch\^atel, Switzerland}
 \email {seongchan.kim@unine.ch}

  \address{Fakult\"at f\"ur Mathematik, Ruhr-Universit\"at Bochum, Universit\"atsstrasse 150, 44801 Bochum, Germany}
 \email {myeonggi.kwon@rub.de}
 \subjclass[2010]{Primary: 37J45; Secondary: 37C80, 53D40}
\keywords{systoles, symmetric periodic orbits, real symplectic capacities}

%\date{Recent modification; \today}
\begin{abstract}
%In this note, we study the systoles of convex hypersurfaces in $\R^{2n}$ invariant under an anti-symplectic involution. We present an approach to understand the symmetric ratio of the hypersurfaces, i.e. the ratio of the systole to the symmetric systole, using symplectic capacities from Floer theory, and we examine the symmetric ratio with various concrete examples.

In this note we study the systoles of convex hypersurfaces in $\R^{2n}$ invariant under an anti-symplectic involution. We investigate a uniform upper bound of the ratio between the systole and the symmetric systole of the hypersurfaces using symplectic capacities from Floer theory. We discuss various concrete examples in which the ratio can be understood explicitly.

%WHAT ABOUT: In this note we study the systole of a Hamiltonian system on a hypersurface in $\R^{2n}$ that is invariant under an anti-symplectic involution. We present an approach to understand the symmetric systole, i.e. the minimal period of symmetric periodic orbits, compared with the systole for convex hypersurfaces using symplectic capacities from Floer theory. We provide various explicit examples. 

%NEED TO REWRITE? In this note we discuss the problem whether the systole of a Hamiltonian system on a hypersurface in $\R^{2n}$, that is invariant under an anti-symplectic involution, can be realized by a symmetric periodic orbit. We present an approach to this question for convex domains using  symplectic capacities from Floer theory, and we exhibit various explicit examples. %We conjecture that for such a hypersurface, the systole is attained by a symmetric periodic orbit, and we show that this is true up to at most a factor of two. 
\end{abstract}

\maketitle

% \tableofcontents

%\newpage

\section{Introduction}

 Let $ \Sigma $ be a  smooth manifold of dimension $2n-1$ equipped with a global 1-form $\alpha$ such that $\alpha \wedge (d\alpha)^{n-1}$ is nowhere vanishing. Such a   pair $(\Sigma, \alpha)$ is called a (co-oriented) contact manifold. We assume throughout that $\Sigma$ is closed and connected. There exists a unique vector field $R = R_{\alpha}$ on $\Sigma$  characterized by the conditions $d\alpha(R, \cdot) =0$ and $\alpha(R) = 1$. The vector field $R$ is called the Reeb vector field associated with $\alpha$. A periodic (Reeb) orbit is a smooth curve $\gamma  \colon \R/ \tau \Z \to \Sigma$ solving the differential equation $\dot{\gamma} = R \circ \gamma$.   The \emph{systole} of $(\Sigma, \alpha)$ is defined as
\[
 \ell_{\min}(\Sigma, \alpha) = \inf \{ \tau >0  \mid \text{$\tau$ is the period of a periodic orbit on $(\Sigma, \alpha)$}     \} >0.
 \]
 By convention, the infimum of the empty set is infinity.  %{\color{blue} it has been studied intensively in Riemannian geometry and so on... bla bla}

 Suppose that the contact manifold $(\Sigma, \alpha)$ is equipped with an anti-contact involution $\rho$, meaning that $\rho^2 = \mathrm{Id}$ and $\rho^* \alpha  = - \alpha$. The triple   $(\Sigma, \alpha, \rho)$ is called a \emph{real contact manifold}.   A periodic orbit $\gamma $  on $(\Sigma, \alpha, \rho)$ is said to be \emph{symmetric} if $\rho( \im( \gamma )) = \im( \gamma )$. By definition, the \emph{symmetric systole} $\ell_{\min}^{\mathrm{sym}}(\Sigma, \alpha, \rho)$   is the infimum over the periods of symmetric periodic orbits on $\Sigma$. We then define the \emph{symmetric ratio}  $\mathfrak{R} (\Sigma, \alpha, \rho)$  as
\begin{equation}\label{eq: defofratio}
\mathfrak{R} (\Sigma, \alpha, \rho) := \frac{   \ell_{\min}^{\mathrm{sym}}(\Sigma, \alpha, \rho)}{\ell_{\min} (\Sigma, \alpha )} \in [1, \infty],
 \end{equation}
provided that there exists a periodic orbit on $(\Sigma, \alpha)$.

\begin{example}\label{exam:first} Let $\Sigma \subset \R^{2n}$ be a smooth, compact, and starshaped hypersurface with respect to the origin. Assume that $\Sigma$ is invariant under the  complex conjugation $$\rho_0(x_1, y_1, \ldots, x_n, y_n) =  (x_1,- y_1, \ldots, x_n,- y_n). $$  The  triple $(\Sigma, \alpha, \rho_0)$ is a real contact manifold, where $\alpha = \lambda_0 |_{\Sigma}$ is the restriction of the Liouville form $\lambda_0 = \tfrac{1}{2} \sum_{j=1}^n ( x_j dy_j - y_j dx_j)$ to $\Sigma$. The Reeb orbits on $\Sigma$ are reparametrizations of the Hamiltonian orbits on $\Sigma$ of   any Hamiltonian $H \colon \R^{2n} \to \R$ having $\Sigma$ as a regular level set.  The existence of a symmetric periodic orbit was established by Rabinowitz  \cite{Rabisym}, implying that  $ \mathfrak{R} (\Sigma, \alpha, \rho_0) $ is finite.  
 \end{example}
 
As of a prominent class of real contact manifolds, we are mainly interested in symmetric convex hypersurfaces in $\R^{2n}$.
In this paper convex domains in $\R^{2n}$ are assumed to contain the origin in the interior, and starshaped domains are starshaped with respect to the origin.
Let $K \subset \R^{2n}$ be a compact convex domain with smooth boundary which is invariant under an anti-symplectic involution $\rho$ of $\R^{2n}$, i.e.\ $\rho^2=\mathrm{Id}$ and $\rho^* d\lambda_0 = -d\lambda_0$.
We call the boundary of $K$ a \emph{symmetric convex hypersurface}. 
Assume that the fixed point set $\Fix(\rho)$ intersects the boundary $\p K$.
This condition necessarily holds if $(\p K,\rho)$ admits a symmetric periodic orbit.
We can find a Liouville form $\lambda$ on the symplectic manifold $(K,d\lambda_0)$ such that its Liouville vector field is transverse along the boundary $\p K$ and $\rho$ is exact with respect to $\lda$ i.e. $\rho^*\lambda=-\lambda$.
For example one takes the average $\lambda:=\frac{1}{2}(\lambda_0-\rho^*\lambda_0)$, see Lemma \ref{lem: ldaisalsofine}.
We define the \emph{(symmetric) systoles} of the symmetric convex hypersurface $(\p K, \rho)$ by the ones of the real contact manifold $(\p K,\alpha:=\lambda|_{\p K},\rho)$:
\begin{equation}\label{eq: systoleforlda}
\ell_{\min}(\p K):=\ell_{\min}(\p K, \alpha)\quad\text{and} \quad \ell_{\min}^{\rm sym}(\p K,\rho):=\ell_{\min}^{\rm sym}(\p K, \alpha,\rho).
\end{equation}
They are independent of the choice of the Liouville form $\lambda$ because the (symmetric) systoles coincide with the minimal actions of (symmetric) closed characteristics on $(\p K, \rho)$.

\begin{remark}\label{rem: chacfoli}
More precisely, recall that the symplectic form $d\lambda_0=\sum_{j=1}^ndx_j\wedge
 dy_j$ induces the \emph{characteristic line bundle} $\mathcal{L}_{\p K}:=\ker (d\lambda_0|_{\p K})$ over $\p K$. This defines a 1-dimensional foliation of $\p K$ whose closed leaf $\gamma$ (i.e. an embedded circle whose tangent spaces lie in $\mathcal{L}_{\p K}$) is called a \emph{closed characteristic of $\p K$}. Its \emph{action} is defined by $\mathcal{A}(\gamma):=\int_{S^1} \gamma^*\lambda$ where $\lambda$ is a Liouville form on $K$ such that $d\lambda=d\lambda_0$. By Stokes's theorem $\mathcal{A}(\gamma)$ is independent of the choice of $\lambda$.
 A closed characteristic $\gamma$ on $\p K$ is called \emph{symmetric} if $\gamma$ is invariant under $\rho$.
Now if $\lambda$ is chosen as above so that $(\p K,\lambda|_{\p K},\rho)$ is a real contact manifold, then there is the correspondence between (symmetric) closed characteristics of $\p K$ and (symmetric) periodic orbits on $(\p K, \lda|_{\p K})$. Indeed, any closed characteristic of $\p K$ is parametrized by a periodic Reeb orbit, and vice versa. The action of a closed characteristic and the period of the corresponding periodic orbit coincide. Therefore, \eqref{eq: systoleforlda} is independent of the choice of~$\lambda$, but does depend on the hypersurface $\p K$ and the symplectic form $d\lambda_0$.
\end{remark}

%{\bf (i)} Now if $\lambda$ is chosen as above so that $(\p K,\lambda|_{\p K},\rho)$ is a real contact manifold, then the correspondence between (symmetric) closed characteristics and (symmetric) periodic orbits on $\p K$ preserves the action of  closed characteristics and the period of the corresponding periodic orbits. 
 %This shows that \eqref{eq: systoleforlda} does not depend on the choice of~$\lambda$.}

% \begin{equation}\label{eq: systoleforlda}
%\begin{aligned}
%\ell_{\min}(\p K) &:=\inf\{\mathcal{A}(\gamma)>0 \mid \text{$\gamma$ is a closed characteristic on $\p K$}\}, \\
%\ell_{\min}^{\rm sym}(\p K,\rho) &:=\inf\{\mathcal{A}(\gamma)>0 \mid \text{$\gamma$ is a symmetric closed characteristic on $\p K$}\}.
%\end{aligned} 	
%\end{equation}

The main result of this note is the following estimate on the symmetric ratio for symmetric convex hypersurfaces.
\begin{theorem} \label{mainthm} Let $K \subset \R^{2n}$ be a compact  and  convex domain with smooth boundary which is invariant under an anti-symplectic involution $\rho$ of $\R^{2n}$. Assume that $\Fix(\rho) \cap \p K \neq \emptyset$. Then the symmetric ratio of the symmetric convex hypersurface $(\p K, \rho)$ satisfies
\begin{equation}\label{main:estimate}
1 \leq \mathfrak{R}(\p K, \rho) \leq 2.
\end{equation}
In particular, on the boundary $\p K$, there exists a symmetric periodic orbit of period less than or equal to $ 2 \ell_{\min}(\p K)$. 
\end{theorem}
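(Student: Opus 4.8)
The plan is to use symplectic capacities coming from Floer theory—specifically a "real" or equivariant version of symplectic homology capacities—as an intermediary between the two systoles. The lower bound $1 \leq \mathfrak{R}(\p K,\rho)$ is immediate from the definition, since a symmetric periodic orbit is in particular a periodic orbit, so $\ell_{\min}^{\rm sym}(\p K,\rho) \geq \ell_{\min}(\p K)$. For the upper bound, I would proceed as follows. First, for a smooth convex domain $K$ with $\p K$ of restricted contact type, the Ekeland--Hofer--Zehnder-type capacity $c_{\mathrm{EHZ}}(K)$ (equivalently, the first equivariant symplectic homology capacity $c_1^{S^1}(K)$, by a theorem of Abbondandolo--Kang / Irie / Gutt--Hutchings in the convex case) equals the minimal action $\ell_{\min}(\p K)$; this identifies the denominator with a capacity. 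Second, I would introduce the corresponding \emph{real} capacity: using that $\rho$ is exact with respect to the averaged Liouville form $\lambda$, one builds a Rabinowitz--Floer or symplectic-homology theory equivariant under the $\Z/2$-action induced by $\rho$ on the free/based loop space (the relevant "real" Lagrangian-boundary version), whose spectral invariant $c^{\mathrm{sym}}(K)$ is represented by the action of a \emph{symmetric} closed characteristic, hence satisfies $c^{\mathrm{sym}}(K) \geq \ell_{\min}^{\rm sym}(\p K,\rho)$ only in one direction—so actually I want the \emph{opposite}: a symmetric capacity that is an \emph{upper} bound for the symmetric systole, built from a min-max over symmetric orbits.

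Concretely, the cleaner route is via the following comparison. On a convex domain, Clarke's dual action principle produces a periodic orbit realizing $c_{\mathrm{EHZ}}(K) = \ell_{\min}(\p K)$; in the presence of the anti-symplectic involution $\rho$ with $\Fix(\rho)\cap\p K\neq\emptyset$, one runs the \emph{symmetric} version of Clarke's variational principle on the space of loops $x$ satisfying $x(-t) = \rho(x(t))$ (Rabinowitz's brake-orbit / symmetric-orbit setup from \cite{Rabisym}), obtaining a symmetric periodic orbit whose action is the symmetric min-max value $c^{\mathrm{sym}}_{\mathrm{EHZ}}(K)$, so $\ell_{\min}^{\rm sym}(\p K,\rho) \leq c^{\mathrm{sym}}_{\mathrm{EHZ}}(K)$. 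It then remains to prove the capacity inequality
\[
c^{\mathrm{sym}}_{\mathrm{EHZ}}(K) \leq 2\, c_{\mathrm{EHZ}}(K).
\]
The key geometric input is that one may pass to the \emph{symmetric double} or use the fixed-point locus: since $L := \Fix(\rho) \cap K$ is a Lagrangian-type half-dimensional submanifold meeting $\p K$, a chord of the Reeb flow with endpoints on $L\cap\p K$ concatenated with its $\rho$-image yields a symmetric periodic orbit of exactly twice the chord action, while the minimal such chord action is bounded above by the Lagrangian (relative) capacity of $(K, L)$, which in the convex case is in turn bounded by $c_{\mathrm{EHZ}}(K)$ itself. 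Assembling, $\ell_{\min}^{\rm sym} \leq 2 \cdot (\text{minimal chord action}) \leq 2\, c_{\mathrm{EHZ}}(K) = 2\,\ell_{\min}(\p K)$.

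I expect the \textbf{main obstacle} to be establishing the inequality "minimal symmetric-orbit action $\leq 2\times$ minimal chord action $\leq 2\times c_{\mathrm{EHZ}}(K)$" rigorously—that is, showing that the min-max value for the symmetric Clarke functional (or the relative/Lagrangian Floer-theoretic capacity of $(K, \Fix(\rho))$) is genuinely controlled by the ordinary capacity $c_{\mathrm{EHZ}}(K)$, with the sharp constant $2$. The doubling heuristic (a Reeb chord from $L$ to $L$ glues with its mirror to a closed symmetric orbit of twice the length) makes the factor $2$ plausible and suggests sharpness, but turning it into a proof requires either (i) a careful monotonicity/restriction argument for the relative symplectic homology of the pair $(K,L)$ under the inclusion of a suitable symmetric ellipsoid or ball, or (ii) a direct critical-point comparison between Clarke's dual functional and its symmetric counterpart on the constraint $x(-t)=\rho(x(t))$, exploiting convexity of the dual Hamiltonian to bound the symmetric min-max level by twice the free one. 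Handling the smoothness/transversality of $\Fix(\rho)\cap\p K$ (it need not be cut out cleanly, and the chord endpoints must be controlled) is the technical point to be careful about; the hypothesis $\Fix(\rho)\cap\p K\neq\emptyset$ is exactly what makes the chord space nonempty and the whole scheme run.
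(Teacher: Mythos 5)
Your overall architecture coincides with the paper's: identify $\ell_{\min}(\p K)$ with a closed-string capacity via Abbondandolo--Kang/Irie, bound $\ell_{\min}^{\rm sym}$ by twice a minimal Reeb chord action on $(\p K, \Fix(\rho)\cap\p K)$ via the doubling trick, and then compare the relative (chord) capacity with the absolute (closed-orbit) capacity. However, the step you yourself flag as ``the main obstacle'' --- that the minimal chord action, i.e.\ the Lagrangian relative capacity of $(K,\Fix(\rho)\cap K)$, is bounded above by $c_{\SH}(K)$ (or $c_{\mathrm{EHZ}}(K)$) --- is precisely the mathematical content of the theorem, and neither of your two suggested strategies is carried out or obviously works. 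Strategy (i), monotonicity under inclusion of a symmetric ellipsoid, cannot give the sharp comparison: squeezing a ball or ellipsoid into $K$ only bounds the relative capacity by the capacity of the inscribed domain, not by $c_{\SH}(K)$ itself. Strategy (ii), a symmetric Clarke dual principle, would at best produce \emph{some} symmetric orbit with a min-max action level, but comparing that level to the free min-max level with constant $1$ is exactly what is unproven. The paper closes this gap with a filtered closed-open map $\CO^a : \SH^a_*(K) \to \HW^a_*(L)$ (defined by counting Floer chimneys), which intertwines the maps $j^a$ from relative singular homology and sends $[K,\p K]$ to $[L,\p L]$; hence $j^a[K,\p K]=0$ forces $j^a[L,\p L]=0$, giving $c_{\HW}(K,\rho)\le c_{\SH}(K)$ for \emph{any} Liouville domain, with convexity entering only through the identity $c_{\SH}(K)=\ell_{\min}(\p K)$.

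A secondary gap: you assert that the minimal chord action is realized and finite, but this requires an argument. In the paper one uses that $\SH_*(K)=0$ (as $K$ is a starshaped domain in $\R^{2n}$), which forces $\HW_*(L)=0$ via the unital ring homomorphism $\CO$, hence $c_{\HW}(K,\rho)<\infty$; spectrality of $c_{\HW}$ then produces a Reeb chord of length exactly $c_{\HW}(K,\rho)$, whose doubling gives $\ell^{\rm sym}_{\min}(\p K,\rho)\le 2c_{\HW}(K,\rho)$. The hypothesis $\Fix(\rho)\cap\p K\neq\emptyset$ alone does not guarantee the existence of a chord, let alone one of controlled action.
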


%\begin{remark}
%{\color{blue}The assumption that $\Fix(\rho) \cap \p K \neq \emptyset$ holds if $\rho$ is \emph{exact} i.e. $\rho^*\lda_0 = -\lda_0$. See Example \ref{ex: admLagconditions}.} 
%\end{remark}

It is particularly interesting to ask when the symmetric ratio is exactly equal to one. This means that the smallest period among all periodic orbits can be realized by symmetric one. 
In Section \ref{sec: Examples} we examine this question with explicit examples of symmetric hypersurfaces. 
Smooth starshaped toric domains, for instance, admit a family of anti-symplectic involutions including the complex conjugation, and we can explicitly understand its Reeb flows. We observe in Section \ref{sec: smoothtoricdomains} that the symmetric ratio in this case is always equal to one even without convexity. 
%On smooth starshaped toric domains, for instance, we can explicitly understand its Reeb flow, and we observe that the symmetric ratio is equal to one with respect to a family of anti-symplectic involutions including the complex conjugation. See Section \ref{sec: smoothtoricdomains}. 
On the other hand, there are symmetric starshaped domains whose boundary has the symmetric ratio bigger than 1. In Section \ref{sec: starshaped>1}, we construct such examples by perturbing the standard contact form on the unit sphere following the Bourgeois' perturbation scheme for Morse--Bott contact forms \cite[Section 2.2]{Bo02}.

\begin{remark}\label{rem: non-symmtricwithminimal} Even if the symmetric ratio is equal to one, there can exist a non-symmetric periodic orbit of the smallest period. Moreover, a symmetric periodic orbit of  the smallest period might not be unique. For example, consider the unit round sphere $S^{2n-1} \subset \R^{2n}\equiv \C^n$ for $n \geq 2$ with the complex conjugation $\rho_0$. The contact form is given by the restriction of the Liouville form as in Example \ref{exam:first}. The associated Reeb flow is periodic, and the periodic orbit $\gamma$ through $z \in S^{2n-1}$ can be parametrized as $\gamma(t) = e^{2 i t}z$, $ t \in \R$.  Then $\gamma$ is symmetric with respect to $\rho_0$ if and only if $\gamma(t_0)\in \R^n $ for some $t_0\in \R$. 
\end{remark}

Another interesting aspect of the estimate \eqref{main:estimate} is that it gives a uniform upper bound of the symmetric ratio for convex hypersurfaces in $\R^{2n}$. Such an upper bound does not necessarily exist for a larger class of hypersurfaces. For example, in Section \ref{sec: Bordeauxexample}, we exhibit symmetric starshaped hypersurfaces, which are Bordeaux-bottle-shaped, whose symmetric ratio is arbitrary large. In Section \ref{rmk:counterexample}, we  provide examples of restricted contact type, not starshaped, hypersurfaces in Hamiltonian systems whose symmetric ratio is also arbitrary large.
%It might be interesting to ask for which classes of symmetric hypersurfaces, containing convex hypersurfaces, the symmetric ratio admits a uniform upper bound. 

Our discussions up to now suggest the following questions:
\begin{itemize}
	\item Is the symmetric ratio for symmetric convex hypersurfaces in $\R^{2n}$ equal to exactly one?
	%\item {\color{blue} Can we find a uniform upper bound of the symmetric ratio for symmetric starshaped hypersurfaces in $\R^{2n}$?}
	\item Under what conditions on real contact manifolds  can we find a uniform upper bound of its symmetric ratio? For example, one can consider \emph{dynamical convexity} for contact manifolds as a substitute of geometric convexity.
\end{itemize}

\begin{remark}
A convex body $K \subset \R^{2n}$, i.e. a compact convex subset in $\R^{2n}$ with non-empty interior, is called \emph{centrally symmetric} if it is invariant under the antipodal map on $\R^{2n}$. Note that the antipodal map is not anti-symplectic
but symplectic. It is shown in Akopyan--Karasev \cite[Corollary 2.2]{AkKa17} that any closed characteristic of minimal action on the boundary of $K$ is itself centrally symmetric, cf. Remark \ref{rem: non-symmtricwithminimal}.
% It is worth noting that this result does not necessarily hold without convexity. One can construct a Bordeaux-bottle-shaped body, invariant under the antipodal map, having a closed characteristic on a thin bottle neck which is not centrally symmetric but is of minimal action, cf. Remark \ref{rem: Bordeaux-bottle}.
\end{remark}

%\begin{remark}
%For a convex body $K$ in $\R^{2n}$ which is \emph{centrally symmetric}, i.e. $K = -K$, it is shown in Akopyan--Karasev \cite[Corollary 2.2]{AkKa17} that any closed characteristic of minimal action on the boundary is itself centrally symmetric, cf. Remark \ref{rem: non-symmtricwithminimal}. 
%\end{remark}

 In Section \ref{sec: Floerhomologycapacities}, we present an approach to obtain the upper bound in Theorem \ref{mainthm} employing symplectic capacities from Floer theory. We first bound the symmetric ratio from above in terms of the symplectic homology capacity  (the $\SH$ capacity) $c_{\SH}$ and the wrapped Floer homology capacity  (the $\HW$ capacity) $c_{\HW}$. 
 An essential ingredient is the recent result of Abbondandolo--Kang \cite{AbonKang} and Irie \cite{Irie} showing for \emph{convex} domains that the systole $\ell_{\min} (\p K )$ coincides with the $\SH$ capacity $c_{\SH}(K)$. Together with the spectral property of the $\HW$ capacity in Proposition \ref{prop: propertiesofHWcapa}, we deduce that
 $$
\frac{\ell_{\min}^{\rm sym}(\p K, \rho) }{\ell_{\min} (\p K )} \leq  \frac{2 c_{\HW}(K, \rho)}{c_{\SH}(K)}.
 $$
\noindent
We can then bound the ratio of the capacities from above using Floer theory. In Section \ref{sec: closedopenmaps} we recall a construction of well-known comparison homomorphisms in Floer homology, called \emph{closed-open maps}. They are defined by counting certain Floer disks with one interior puncture (asymptotic to a Hamiltonian 1-orbit) and one boundary puncture (asymptotic to a Hamiltonian 1-chord) with Lagrangian boundary condition. See Figure \ref{fig: chim}. We call them \emph{Floer chimneys} as in \cite[Figure 11]{Al08}. Closed-open maps are compatible with the action filtrations on the Floer homologies in the sense of Theorem \ref{thm: closedopenmap}.  As also observed in \cite{BormanMclean}, it is rather straightforward to obtain the desired upper bound from the existence of filtered closed-open maps.

At the heuristic level the underlying geometric idea is the following. By the spectral properties, the $\SH$ capacity $c_{\SH}(K)$ is the action $\mathcal{A}(\gamma)$ of a periodic orbit $\gamma$ on $\p K$ and the $\HW$ capacity $c_{\HW}(K, \rho)$ is the action $\mathcal{A}(x)$ of a chord  $x$ on $(\p K, \p \Fix(\rho))$. Closed-open maps in principle tell  us that there exists a $J$-holomorphic chimney %(i.e.\ a $J$-holomorphic disk with one interior puncture, one boundary puncture, and Lagrangian boundary conditions) 
asymptotic to $\gamma$ at the interior puncture and asymptotic to $x$ at the boundary puncture. Since the energy of $J$-holomorphic chimneys is necessarily non-negative, one has $c_{\HW}(K, \rho)=\mathcal{A}(x) \leq \mathcal{A}(\gamma) = c_{\SH}(K)$ by  Stokes'~theorem.

The wrapped Floer homology capacity for symmetric domains can be seen as a symplectic capacity for symplectic manifolds with symmetries, which we call a \emph{real symplectic capacity}. The upper bound in Theorem \ref{mainthm} hinges on relationships between real and non-real symplectic capacities. In Section \ref{sec:examples} we discuss further examples of real symplectic capacities which might be of independent interest. See also \cite{CHLS07} for more information on symplectic capacities.

\begin{remark} One finds a motivation to study the symmetric systole in the context of the planar circular restricted three-body problem (PCR3BP). This problem studies the motion of a massless body influenced by two bodies of positive mass according to Newton's law of gravitation, where the two massive bodies move in circles about their common center of mass, and the massless body is confined to the plane determined by the two bodies. Denote by $c_*$ the energy value of the Hamiltonian $H$ of the PCR3BP such that for every $c<c_*$, the  level set $H^{-1}(c)$ contains two bounded components near either massive body. In what follows, we concentrate on one of the two bounded components, denoted by~$\Sigma_c$. It is invariant under the   anti-symplectic involution $\rho$ whose fixed point set projects into the configuration space $\R^2$ as a subset of the horizontal axis. In \cite{Birkhoff} Birkhoff found a Reeb chord on $\Sigma_c$  via shooting argument and closed it up using $\rho$ to obtain   a symmetric periodic orbit, called a \emph{retrograde periodic orbit}. In a real-world situation, a direct periodic orbit is more important since most orbits of moons in the solar system are direct. However, Birkhoff did not give an analytic proof of the existence of a direct periodic orbit.   Instead, he conjectured that for each $c<c_*$, the retrograde periodic orbit on $\Sigma_c$ bounds a disk-like global surface of section. Birkhoff believed that a  fixed point of the associated first return map, whose existence is assured by Brouwer's translation theorem, corresponds to a direct periodic orbit. One way to prove this conjecture is to look at the period of the retrograde periodic orbit.   Indeed,  the SFT-compactness theorem says  that if the retrograde periodic orbit has  the smallest period,  then   this would imply  Birkhoff's conjecture. For details, we refer to a beautiful exposition~\cite{book}.
\end{remark}

\subsection*{Acknowledgement} The authors cordially thank Urs Frauenfelder, Jungsoo Kang, and Felix Schlenk for fruitful discussions, Yaron Ostrover for helpful comments. They also thank the anonymous referee for valuable suggestions. A part of this work was done while MK visited Korea Institute for Advanced Study. The authors are grateful for its warm hospitality.
JK is supported by a KIAS Individual Grant MG068002 at Korea Institute for Advanced Study. SK is supported by the grant 200021-181980/1 of the Swiss National Foundation. MK is supported by the SFB/TRR 191 \emph{Symplectic Structures in Geometry, Algebra and Dynamics}, funded by the DFG.

\section{Examples}\label{sec: Examples} In this section we discuss examples for the symmetric ratio \eqref{eq: defofratio} on various symmetric hypersurfaces.

\subsection{In dimension two} Let $W$ be a subset of $\R^{2}$ that is diffeomorphic to a closed disc and  invariant under an   anti-symplectic involution $\rho$. There exists a unique simply covered periodic orbit $\gamma$, which is a parametrization of the $\rho$-invariant circle $\p W$. Moreover,  $\gamma$ is $\rho$-symmetric. It follows that $\ell_{\min}(\p W) = \ell_{\min}^{\mathrm{sym}}(\p W)$ and hence $\mathfrak{R}(\p W, \rho ) = 1$.

\subsection{Ellipsoids}  Given  $a_j \in \R_{> 0}$, $j=1,\ldots, n$, the associated ellipsoid is given by
  \[
   E(a_1, \dots, a_n) : = \Bigg\{z \in \C^{n} \;\bigg|\; \sum_{j=1}^n \frac{\pi|z_j|^2}{a_j} \leq 1 \Bigg\}  .
\]
With respect to the standard contact form, i.e.\ the restriction of the Liouville form from Example \ref{exam:first},  the complex conjugation $\rho_0$ provides  an anti-contact involution on the boundary $\p E(a_1, \ldots, a_n)$. The Reeb flow can explicitly be written by coordinate-wise rotations on $\C^n$. Periodic orbits are of the form
\begin{equation}\label{eq: perorbellip}
\gamma (t)= (e^{\frac{2\pi it}{a_1}}z_1, e^{\frac{2\pi it}{a_2}}z_2, \dots, e^{\frac{2\pi it}{a_n}}z_n)
\end{equation}
    for some $(z_1, z_2, \dots, z_n) \in \p E(a_1, \dots, a_n)$. Assuming $a_1 \leq a_2 \leq \cdots \leq a_n$ without loss of generality, the periodic orbit $\gamma_1 (t) = (e^{\frac{2\pi it}{a_1}}z_1, 0, \dots, 0)$ with $z_1 \neq 0$ attains the minimal period and is symmetric with respect to $\rho_0$. Hence the symmetric ratio is equal to one. In general, a periodic orbit of the form \eqref{eq: perorbellip} is $\rho_0$-symmetric if and only if $\gamma(t_0)\in \R^n$ for some $t_0\in \R$.

\subsection{Smooth starshaped toric domains} \label{sec: smoothtoricdomains}
Define the  {moment map}  $\mu \colon \C^n \to \R^n_{\geq 0}$ as 
$$
\mu( z_1, \ldots, z_n) = \pi ( \lvert z_1 \rvert^2, \ldots, \lvert z_n \rvert ^2 ).
$$
It is invariant under the exact anti-symplectic involution
\begin{equation}\label{eq:antisympinvol}
\rho_{\theta}(z) = (e^{i\theta_1}\overline z_1, \dots, e^{i\theta_n}\overline z_n)
\end{equation}
for each  $ \theta = (\theta_1, \ldots ,\theta_n) \in \R^n$. For a domain $\Omega \subset \R^n_{\geq 0}$, the preimage $X_{\Omega} := \mu^{-1}( \Omega ) \subset \C^n$ is called a \textit{toric domain}. Note that any toric domain is $\rho_{\theta}$-invariant. For example, the ellipsoid $E(a_1, \ldots, a_n)$ is a smooth toric domain associated to the simplex
\[
\Omega = \Bigg\{  x \in \R^n_{ \geq 0} \;\bigg|\;   \sum_{j=1}^n \frac{x_j}{a_j} \leq 1   \Bigg\}.
\]
A toric domain is not necessarily smooth, but, in this note, we only consider \emph{smooth}  ones.

In what follows we assume that a domain $\Omega \subset \R^n _{\geq 0}$ is smooth, compact,  and  starshaped (with respect to the origin). Then the associated toric domain $X_\Omega\subset \C^n$ is a smooth toric domain that is compact and starshaped.  
 %We shall show that   $\mathfrak{R}(\p X_\Omega,\rho_\theta)=1$ for every $\theta \in \R^n$. Fix $\theta \in \R^n$ and note  that   $X_{\Omega}$ is invariant under the $\mathbb{T}^n$-family of the exact symplectic involutions
%\[
% \sigma_{\phi}(z) = ( e^{ i \phi_1} z_1,   \ldots,  e^{ i \phi_n}  z_n), \quad \phi = (\phi_1, \ldots, \phi_n) \in \R^n.
% \]
%For each $z \in \C^n$, we have  $\{ \mu (\sigma_{\theta}(z)) \mid \theta \in \R^n \} = \{ \pi ( \lvert z_1 \rvert^2 , \ldots, \lvert z_n \rvert^2 ) \} \in \R^n_{\geq 0}$. 
%Since $\sigma_{\phi}$ is symplectic, if $\gamma$ is a periodic orbit on $\p X_{\Omega}$, then so is $\sigma_{\phi}(\gamma)$.  
 We shall show that   $\mathfrak{R}(\p X_\Omega,\rho_\theta)=1$ for every $\theta \in \R^n$. 
 
Note  that   $X_{\Omega}$ is invariant under the $\mathbb{T}^n$-family of the exact symplectomorphisms
\[
 \sigma_{\phi}(z) = ( e^{ i \phi_1} z_1,   \ldots,  e^{ i \phi_n}  z_n), \quad \phi = (\phi_1, \ldots, \phi_n) \in \R^n.
 \]
%For each $z \in \C^n$, we have  $\{ \mu (\sigma_{\theta}(z)) \mid \theta \in \R^n \} = \{ \pi ( \lvert z_1 \rvert^2 , \ldots, \lvert z_n \rvert^2 ) \} \in \R^n_{\geq 0}$. 
If $\gamma$ is a periodic orbit on $\p X_{\Omega}$, then so is $\sigma_{\phi}(\gamma)$. In fact, each fiber torus $\mu^{-1}(w)$, $w\in \Omega$, is foliated by periodic orbits, see e.g. \cite[Section 2.2]{GH}, and hence any periodic orbit on $\p X_\Omega$ is contained in a fiber torus.

For a fixed $\theta \in \R^n$, each fiber torus contains a $\rho_{\theta}$-symmetric periodic orbit. Indeed, in view of the fact that $\rho_{\theta}^* R = - R$, where $R$ is the Reeb vector field on $\p X_{\Omega}$, a periodic orbit $\gamma$ is $\rho_{\theta}$-symmetric if and only if $\gamma(\R) \cap \mathrm{Fix}(\rho_{\theta}) \neq \emptyset$. For a periodic orbit $\gamma$ in a fiber torus $\mathcal{T}$, it is always possible to find $\phi\in \R^n$ such that $\sigma_\phi(\gamma)$ intersects $\Fix(\rho_\theta)$. Then $\sigma_{\phi}(\gamma)$ is a $\rho_{\theta}$-symmetric periodic orbit in $\mathcal{T}$. 
%We find  $\phi \in \R^n$ and rotate $\gamma$ using $\sigma_{\phi}$ so that  $\sigma_{\phi}(\gamma)$ intersects $\mathrm{Fix}(\rho_{\theta})$. 

As all periodic orbits belonging to the same fiber torus have the same period, this implies that   $\mathfrak{R}(\p X_\Omega,\rho_\theta)=1$. Actually, for every periodic orbit $\gamma$ on $\p X_{\Omega}$, there exists $\theta = \theta(\gamma) \in \R^n$ such that $\gamma$ is a $\rho_{\theta}$-periodic orbit. 

Recall that a toric domain $X_{\Omega}$ is said to be \emph{convex} if 
\[
\widehat{\Omega}= \{ (x_1, \ldots, x_n ) \in \R^n \mid ( \lvert x_1 \rvert, \ldots, \lvert x_n \rvert) \in \Omega \}  \subset \R^n
\]
 is convex. In this case, we know which (symmetric) periodic orbit attains the smallest period. From the convexity of $\widehat{\Omega}$, we can show by computing the Reeb vector field that the fiber orbit at a point of $\p \Omega$ along a coordinate axis, i.e.\ an intersection point of $\p X_\Omega$ with a coordinate axis, attains the smallest period. Moreover, it is also obvious from the Reeb flow that such a periodic orbit is   $\rho_{\theta}$-symmetric for every $\theta \in \R^n$.

\subsection{Starshaped domains with the symmetric ratio slightly bigger than one}\label{sec: starshaped>1}

For every $\theta\in \R^n$ we can construct a $\rho_\theta$-symmetric starshaped domain $K$ in $\R^{2n}$ with $\mathfrak{R}(\p K,\rho_\theta)>1$, where $\rho_{\theta}$ is defined as in \eqref{eq:antisympinvol}, by perturbing the round sphere. Without loss of generality, we only consider the case of the complex conjugation $\rho=\rho_0$.

%For every $\theta\in \R^n$ there exists a $\rho_\theta$-symmetric starshaped domain $K$ in $\R^{2n}$ with $\mathfrak{R}(\p K,\rho_\theta)>1$, where $\rho_{\theta}$ is defined as in \eqref{eq:antisympinvol}.  Without loss of generality, we only consider the case of the complex conjugation $\rho=\rho_0$.

Let $B\subset (\R^{2n},\lambda_0)$ denote the closed unit ball. For $h\in C^\infty(\p B, \R)$ with $h\ge 1$, we define the starshaped domain in $\R^{2n}$
$$
K_h:=B\cup_{\p B} \{(r,x)\in [1,\infty)\times \p B \mid x\in \p B,\ r\le h(x)\}
$$
by attaching the graph of $h$ along the boundary $\p B$ via the Liouville flow of $\lambda_0$.  Note that $\p K_h$ is contactomorphic to the unit sphere $\p B$ equipped with the contact form $h\alpha_0$. Since the Reeb flow $\phi^t$ on $(\p B,\alpha_0)$ satisfies $\rho\circ \phi^{-t}\circ \rho=\phi^t$, the involution $\rho$ of $\p B$ descends to the involution $\bar{\rho}$ of $\p B/S^1\cong \C P^{n-1}$, where the $S^1$-action on $\p B$ is given by the Reeb flow. 

Take a $\bar{\rho}$-invariant Morse function $\bar{f}\ge 0$ on $\p B/S^1$ which attains the minimum precisely at a pair of two critical points away from the fixed point set of $\bar{\rho}$. We write $f\in C^\infty(\p B,\R)$  for the lifting of $\bar{f}$. Set $h_\epsilon:=1+\epsilon f$ for $\epsilon>0$. Since $h_\epsilon$ is $\rho$-invariant, the starshaped domain $K_{h_\epsilon}$ is symmetric. We claim that for $\epsilon>$ sufficiently small we have $\mathfrak{R}(\p K_{h_\epsilon},\rho)>1$, but this will be close to 1. We denote by $T_{\min}$ the minimal period of the Reeb flow of the standard contact sphere $(\p B, \alpha_0)$. Recall from Bourgeois \cite[Section 2.2]{Bo02} that for any $T>T_{\min}$ there exists $\epsilon>0$ such that the periodic orbits of $(\p B,\alpha_\epsilon:=h_\epsilon \alpha)$ of period less than $T$ are non-degenerate and correspond to the critical points of $\bar{f}$. For a critical point $\bar{x}$ of $\bar{f}$ the corresponding periodic orbit is the $S^1$-fiber $\gamma_{\bar{x}}$ of the fibration $\p B\to \p B/S^1$ at $\bar{x}$, and its period~is given by $T_{\min}h_\epsilon(x)$ for any lift $x\in \p B$ of $\bar{x}$.  The periodic orbit $\gamma_{\bar{x}}$ is symmetric if and only if $\bar{x}$ is a fixed point of $\bar{\rho}$. Now we take $T>0$ slightly bigger than $T_{\min}$. For $\epsilon>0$ small enough, the minimum period of non-symmetric periodic orbits is strictly smaller than the minimum period of symmetric periodic orbits. This shows~that $\mathfrak{R}(\p K_{h_\epsilon},\rho)>1$. It is worth noting that    $\mathfrak{R}(\p K_{h_\epsilon},\rho)$ can be arbitrarily close to one.

\subsection{Bordeaux-bottle-shaped hypersurfaces of arbitrarily large symmetric ratio} \label{sec: Bordeauxexample}
Recall that the classical Bordeaux-bottle $K\subset \R^{2n}$ is a smooth starshaped domain obtained by gluing a thin neck, modeled on the symplectic 2-subspace $\R^2\times \{0\}$, along the boundary of the unit ball $B$, see \cite[Section~3.5]{HZbook}.
Let $\rho$ denote the complex conjugation on $\R^{2n}$.
For a given symplectic 2-subspace $V\subset \R^{2n}$ with $\rho(V)\cap V=\{0\}$, we glue the two thin necks, associated to $V$ and $\rho(V)$ respectively, along the boundary $\p B$ to form a $\rho$-symmetric Bordeaux-bottle-shaped domain $K_V$ with two necks. See Figure \ref{Fig:counter111}. Since $\rho$ sends one neck to the other, any periodic orbits on the thin necks are not symmetric.
Moreover, symmetric periodic orbits on $\p K_V$ exist on $\p B$, and the period of any symmetric periodic orbits of $\p K_V$ is uniformly bounded from below. Making the necks narrow, we obtain a symmetric starshaped domain $K_V$ with arbitrarily large symmetric ratio. 
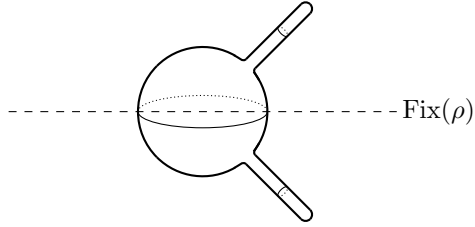
\begin{figure}[h]
\begin{center}
\begin{tikzpicture}[scale=0.43]
  \begin{scope}[rotate=-45]
 \draw[thick] (-0.2,2.1) to (-0.2, 4.5);
  \draw [thick] ( -2 ,0) arc ( 180:105:2cm and  2cm);
  \draw [thick] ( -2 ,0) arc ( 180:225:2cm and  2cm);
    %\draw [thick] ( 0.2,2.1) arc ( 75:45:2cm and  2cm);

  \draw[thick] (-0.2, 2.1) to [out=270, in=20](-0.55, 1.92);
\draw[thick] (-0.2, 4.5) to [out=90, in=180] (0, 4.7);
\begin{scope}[xscale=-1]
 \draw[thick] (-0.2,2.1) to (-0.2, 4.5);
 \draw[thick] (-0.2, 2.1) to [out=270, in=20](-0.55, 1.92);
\draw[thick] (-0.2, 4.5) to [out=90, in=180] (0, 4.7);
\end{scope}
 \draw   (-0.2, 3.5)  arc ( 180:360:0.2cm and  0.08cm);
   \draw[densely dotted]   (-0.2, 3.5)  arc ( 180:0:0.2cm and  0.08cm);
 
 \end{scope}
\begin{scope}[yscale=-1]
 \begin{scope}[rotate=-45]
 \draw[thick] (-0.2,2.1) to (-0.2, 4.5);
  \draw [thick] ( -2 ,0) arc ( 180:105:2cm and  2cm);
  \draw [thick] ( -2 ,0) arc ( 180:225:2cm and  2cm);
    %\draw [thick] ( 0.2,2.1) arc ( 75:45:2cm and  2cm);

  \draw[thick] (-0.2, 2.1) to [out=270, in=20](-0.55, 1.92);
\draw[thick] (-0.2, 4.5) to [out=90, in=180] (0, 4.7);
\begin{scope}[xscale=-1]
 \draw[thick] (-0.2,2.1) to (-0.2, 4.5);
 \draw[thick] (-0.2, 2.1) to [out=270, in=20](-0.55, 1.92);
\draw[thick] (-0.2, 4.5) to [out=90, in=180] (0, 4.7);
\end{scope}
 \draw   (-0.2, 3.5)  arc ( 180:360:0.2cm and  0.08cm);
   \draw[densely dotted]   (-0.2, 3.5)  arc ( 180:0:0.2cm and  0.08cm);
 
 \end{scope}
\end{scope}
 
  \draw [thick] ( 2 ,0) arc ( 0:38:2cm and  2cm);
\begin{scope}[yscale=-1]
  \draw [thick] ( 2 ,0) arc ( 0:38:2cm and  2cm);
\end{scope}

\draw[dashed] (-6,0) to (6,0);
\node at (7.3,0) {$\mathrm{Fix}(\rho)$};
  \draw   (-2,0)  arc ( 180:360:2cm and  0.5cm);
  \begin{scope}[yscale=-1]
 \draw[densely dotted]   (-2,0)  arc ( 180:360:2cm and  0.5cm);
\end{scope}
    \end{tikzpicture}
\end{center}
\caption{A symmetric Bordeaux-bottle-shaped domain having two necks}
\label{Fig:counter111}
\end{figure}
Below, we provide a detailed account of this construction.

Consider the symplectic 2-subspace $V$ in $(\R^{2n},\ow_0=\sum_{j=1}^ndx_j\wedge dy_j)$ spanned~by
\begin{align*}
	v_1 &= (1,0,0,1,0,\dots,0), \\
	Jv_1 &= (0,1,-1,0,0,\dots,0),
\end{align*}
where $J$ denotes the standard complex structure on $\C^n\cong \R^{2n}$.
A simple computation shows that
\begin{equation}\label{eq: V_disj}
\rho(V)\cap V=\{0\}.	
\end{equation}
Since $\rho$ is anti-symplectic, $\rho(V)$ is a symplectic 2-subspace. We then obtain a symplectic orthogonal decomposition
$$
\R^{2n}=V\oplus \rho(V)\oplus W,
$$
where $W=(V\oplus \rho(V))^\perp$ denotes the symplectic complement of $V\oplus \rho(V)$. Since $V\oplus \rho(V)$ is $\rho$-invariant, so is $W$.
Using the Gram--Schmidt process \cite[Lemma~2.6.6]{MS17book}, we can construct a unitary basis on $\R^{2n}$,
\begin{equation}\label{eq: new_coord}
\{v_1,Jv_1,\dots,v_n,Jv_n\},	
\end{equation}
such that 
\begin{itemize}
	\item $v_2:=\rho(v_1)$ and $Jv_2=J\rho(v_1)=-\rho(Jv_1)$,
	\item $\{v_3,Jv_3,\dots, v_n,Jv_n\}$ is a unitary basis for $W$.
\end{itemize}
We denote by
\begin{equation}\label{eq: coord}
({\bf x}',{\bf y}')=(x_1',y_1',\dots, x_n',y_n')	
\end{equation}
the symplectic coordinates on $\R^{2n}$ with respect to the basis \eqref{eq: new_coord}. Then $K_V$ is defined to be a $\rho$-symmetric smooth starshaped domain in $\R^{2n}$ consisting~of
\begin{itemize}
	\item a bounded piece of the neck $N_V=\{({\bf x}',{\bf y}')\mid (x_1')^2+(y_1')^2 \le \epsilon \}$ of $V$;
	\item a bounded piece of the neck $N_{\rho(V)}=\{({\bf x}',{\bf y}')\mid (x_2')^2+(y_2')^2 \le \epsilon \}$ of $\rho(V)$;
	\item the unit ball $B=\{({\bf x}',{\bf y}') \mid \sum_{j=1}^n (x_j')^2+(y_j')^2 \le 1\}$.
\end{itemize}
Here, a smoothing procedure is required as in the well-known case of a Bordeaux-bottle having one neck. 
We emphasize that the smoothing procedure in the standard Bordeaux-bottle is still enough for our case, since the gluing regions of $\p N_V$ and $\p N_{\rho(V)}$ along $\p B$ are disjoint due to \eqref{eq: V_disj}. 
Moreover, the unit ball in the coordinates \eqref{eq: coord} coincides with the unit ball in the standard coordinates.

We claim that the symmetric ratio $\mathfrak{R}(\p K_V,\rho)$ can be arbitrarily large by choosing $\epsilon>0$ small enough.
Since $V$ is chosen to be symplectic, every periodic orbit on the boundary of the neck $N_V$ is of the form
\begin{equation*}\label{eq: orbit}
\gamma(t)=(w_1e^{\frac{2it}{\epsilon}},w_2,\dots,w_n),	
\end{equation*}
where the identifications \eqref{eq: coord} and $w_j=x'_j+iy'_j$ are used. They have small periods depending on $\epsilon>0$. The similar holds for periodic orbits on $\p N_{\rho(V)}$. Thanks to~\eqref{eq: V_disj}, any periodic orbits on the necks $\p N_V$ and $\p N_{\rho(V)}$ are not symmetric under~$\rho$. As mentioned before, symmetric periodic orbits of $\p K_V$ exist on the boundary $\p B$, and the period of any symmetric periodic orbits of $\p K_V$ is uniformly bounded from below. Therefore, the claim follows.

\subsection{Hypersurfaces of arbitrarily large symmetric ratio in Hamiltonian systems}
\label{rmk:counterexample} 
Recall that a hypersurface $\Sigma$ in $\R^{2n}$ is called of \emph{restricted contact type} if there exists a Liouville vector field $X$ which is  defined in a neighborhood of the hypersurface and which is transverse to $\Sigma$. %By definition, a Liouville vector field $X$ satisfies $\mathcal{L}_X \omega_0= \omega_0$. 
If $\Sigma$ is of restricted contact type with the radial vector field $X = \tfrac{r}{2} \p_r$, then it is starshaped.  Here we provide  a restricted contact type, but not starshaped, hypersurface of arbitrarily large symmetric ratio. 

%Therefore, it is reasonable to ask the above question for a smaller class of symmetric hypersurfaces, for example, starshaped hypersurfaces, than the class of restricted contact type ones.   We do not know an  example of starshaped and non-convex invariant hypersurface  for which the symmetric ratio is bigger than one (cf. Remark \ref{rem:bottle}). 

Consider a mechanical Hamiltonian $H(q,p) = \tfrac{1}{2} \lvert p \rvert^2 + V(q)$, $(q,p) \in \R^2 \times \R^2$, where the potential $V$ is invariant under the involution $(q_1, q_2) \mapsto (- q_1, q_2)$. It follows that $H$ is invariant under the   anti-symplectic involution $\rho(q_1,q_2,p_1,p_2) = (-q_1,q_2,p_1,-p_2)$, and hence for every $E \in \R$, the energy level set $H^{-1}(E)$ is $\rho$-invariant.  We assume the following.
\begin{itemize}
\item There exist exactly two saddle points $(\pm a,0)$ of $V$ such that $V( \pm a ,0) =0$.
\item For $E>0$ small enough,   $H^{-1}(-E)$ consists of three 3-spheres. 
\end{itemize}
The first condition implies that the equilibriums $(\pm a, 0,0,0)$  of $H$ are of saddle-center type, and the second condition implies that the energy level $H^{-1}(E)$ for $E=0$ and   for $E$ small enough project into the position space $\R^2$ as in Figure \ref{Fig:counter}. 
    \begin{figure}[h]
\begin{center}
\begin{tikzpicture}[scale=0.6, >=latex]
	\draw [thick, fill=gray!20] plot [smooth cycle, tension=0.7] coordinates {
(0, 1.5)
(0.5, 1.2)
(1, 0)
(2, -1)
(3,0)
(2.4, 1)
(1, 0)
(0.5, -0.8)
(0, -1)
(-0.5, -0.8)
(-1, 0)
(-2.4, 1)
(-3, 0)
(-2, -1)
(-1, 0)
(-0.5, 1.2)
};
		\draw [->] (-3.5,0) to (3.5,0);
	\draw [->] (0,-1.7) to (0,2.1);
	
   \draw [fill] (1,0) circle [radius=0.07];
  \draw [fill] (-1,0) circle [radius=0.07];	
	
\node at (1.2, -1.4) {$E=0$};
\node at (3.3,-0.5) {$q_1$};
\node at (0.5, 1.8) {$q_2$};	
	
\begin{scope}[xshift=9cm]
	\draw [thick, fill=gray!20] plot [smooth cycle, tension=0.7] coordinates {
(0, 1.5)
(0.5, 1.2)
(0.85, 0.4)
(1.3, 0.2)
(2.4, 1)
(3, 0)
(2, -1)
(1.3, -0.2)
(0.85, -0.3)
(0.5, -0.8)
(0, -1)
(-0.5, -0.8)
(-0.85, -0.3)
(-1.3, -0.2)
(-2, -1)
(-3, 0)
(-2.4, 1)
(-1.3, 0.2)
(-0.85, 0.4)
(-0.5, 1.2)
};
		\draw [->] (-3.5,0) to (3.5,0);
	\draw [->] (0,-1.7) to (0,2.1);
\node at (1.7, -1.4) {$E>0$ small};
\node at (3.3,-0.5) {$q_1$};
\node at (0.5, 1.8) {$q_2$};

\draw  [red, thick] (1.1, 0) arc (0:360:0.05cm and  0.1cm);
\draw  [red, thick] (-1, 0) arc (0:360:0.05cm and  0.1cm);

\end{scope}

\end{tikzpicture}
    \caption{The projections of the energy levels $H^{-1}(E)$ into the position space $\R^2$}
    \label{Fig:counter}

\end{center}
%\caption{An invariant starshaped hypersurface of arbitrarily large symmetric ratio}
%\label{Fig:counter}
\end{figure}
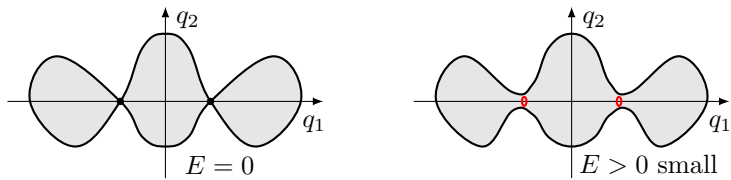
For every $E>0$ sufficiently small, $H^{-1}(E)$ is not starshaped, but of restricted contact type as  $H$ is of mechanical type.  Since $(\pm a, 0,0,0)$ are of saddle-center type, in view of a well-known theorem by Lyapunov, if $E>0$ is small enough, $H^{-1}(E)$ carries periodic orbits $\gamma_1, \gamma_2= \rho(\gamma_1)$, called the  Lyapunov orbits (red curves in Figure \ref{Fig:counter}). As $E \to 0^+$, they converge to equilibriums.  Moreover, in a sufficiently small neighborhood of equilibriums, there exists no periodic orbit other than the associated Lyapunov orbit, and   periodic orbits that pass  this neighborhood have sufficiently large periods.   This in particular implies that if $E>0$ is small enough, then the periods of the Lyapunov orbits are extremely small, but the periods of other periodic orbits are bounded from below some positive constant.   As the Lyapunov orbits are not $\rho$-symmetric, we conclude that the symmetric ratio can be chosen arbitrarily~large.

\section{Closed-open maps}\label{sec: section3}

\subsection{Symplectic homology}\label{sec: SH} We briefly recall the construction of symplectic homology without technical details.  We refer the reader to \cite[Section 2]{BO_MB} for a detailed description. We work with Liouville domains, and prominent examples are starshaped domains in $\R^{2n}$ including smooth convex bodies. In this paper, we always use $\Z_2$-coefficients.

Let $(W, \lda)$ be a Liouville domain with a Liouville form $\lda$. This means $W$ is a compact smooth manifold with boundary and $\lda$ is a 1-form on $W$ such that $d\lda$ is symplectic and its Liouville vector field is positively transverse along the boundary. The restriction $\alpha : = \lda|_{\p W}$ of the Liouville form defines a contact form on the boundary, and we denote the contact boundary by $(\Sigma, \alpha) : = (\p W, \lda|_{\p W})$. The completion $(\widehat W, \widehat \lda)$ of the Liouville domain $(W, \lda)$ is an open symplectic manifold defined by attaching (a positive part of) the symplectization $([1, \infty) \times \Sigma, r\alpha)$ to the domain $(W, \lda)$ along the boundary via the Liouville flow. Here $r \in [1, \infty)$ denotes the Liouville coordinate. 

\begin{example}\label{ex: ballLiouv}
The closed unit ball $B^{2n} \subset \R^{2n}$ with the standard symplectic~form $\ow_0 = \sum_{j=1}^{n}dx_j \wedge dy_j$ is a Liouville domain with a Liouville form $\lda_0 = \tfrac{1}{2} \sum_{j=1}^n ( x_j dy_j - y_j dx_j)$. The contact type boundary is the standard contact sphere $(S^{2n-1}, \alpha_{0})$ with $\alpha_0 = \lda_0|_{\p B^{2n}}$. The completion of $B^{2n}$ recovers $\R^{2n}$. More generally, any starshaped domains  in $\R^{2n}$, including smooth convex ones, fit into our setup for Floer theory.
\end{example}

\subsubsection{Admissible Hamiltonians}

We take an \emph{admissible} time-dependent Hamiltonian $H_{S^1}: S^1 \times \widehat W \rightarrow \R$, meaning that all 1-periodic orbits of the Hamiltonian vector field $X_{H_{S^1}}$ is non-degenerate, $H_{S^1}$ is negative and $C^2$-small (and Morse) in the interior of $W \subset \widehat W$, and $H_{S^1}$ is linear at the end with respect to the Liouville coordinate $r$, independent of the time parameter $ t\in S^1$.
The derivative $H_{S^1}'(r)$ at the end is called the \emph{slope} of the Hamiltonian $H_{S^1}$. We assume that the slope is positive and not equal to the period of a periodic Reeb orbit in the contact boundary $(\Sigma, \alpha)$. See Remark \ref{rem: noperiodcontactregion}.
 \begin{remark}
Our convention for Hamiltonian vector fields is that $\ow(X_{H}, \cdot) = dH$. 
\end{remark}

Denote the set of contractible 1-periodic orbits of $H_{S^1}$ by $\mathcal{P}(H_{S^1})$. To each 1-periodic orbit $\gamma \in \mathcal{P}(H_{S^1})$ we can associate an integer called the Conley--Zehnder index $\CZ(\gamma)$ by taking a capping disk of $\gamma$. We assume that $c_1(TW)$ vanishes on $\pi_2(W)$ for well-definedness of the index $\CZ(\gamma)$. See \cite{BO_MB} for details on the index.

\subsubsection{Chain complex}\label{sec: symplecticchaincomplex} Let $J_{S^1} = \{J_t\}_{t \in S^1}$ be a time-dependent family of compatible almost complex structures on $(\widehat W, \widehat \lda)$ which is admissible in the sense of \cite{BO_MB}. The \emph{Floer chain group} $\CF_*(H_{S^1}, J_{S^1})$ for the pair $(H_{S^1}, J_{S^1})$ is a $\Z$-graded vector space over $\Z_2$, generated by the 1-periodic orbits of $\mathcal{P}(H_{S^1})$ and graded by the negative Conley--Zehnder index $|\gamma|=-\CZ(\gamma)$:
$$
\CF_k(H_{S^1}, J_{S^1}) = \bigoplus_{\substack{\gamma \in \mathcal{P}(H_{S^1}) \\ |\gamma| = k}} \Z_2 \langle \gamma \rangle.
$$
For two distinct 1-periodic orbits $\gamma_{\pm} \in \mathcal{P}(H_{S^1})$, define the moduli space of \emph{Floer cylinders} $\mathcal{M}(\gamma_-, \gamma_+, H_{S^1}, J_{S^1})$ from $\gamma_-$ to $\gamma_+$, modulo the natural $\R$-action, by 
\begin{align}\label{eq: SHmoduli}
\begin{split}
\mathcal{M}(\gamma_-, \gamma_+, H_{S^1}, J_{S^1}) = \{u: \R \times S^1 \rightarrow \widehat W \;|\; &\lim_{s \rightarrow \pm \infty} u(s, t) = \gamma_{\pm}(t), \\ &(du - X_{H_{S^1}}\otimes dt )^{0,1} = 0 \} / \R.
\end{split}
\end{align}
See the left in Figure \ref{fig: cylandstr}.

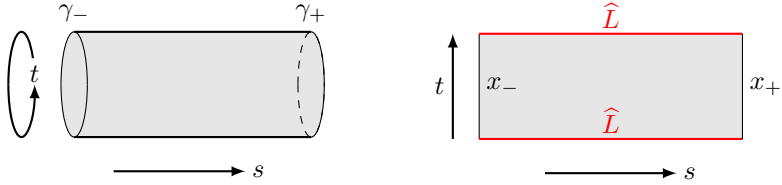
\begin{figure}[h]
\begin{subfigure}{0.45\textwidth}
   \centering
\begin{tikzpicture}[scale=0.35]

   \begin{scope}[>=latex]
%gray filling
\draw [gray!20, fill=gray!20] (-11.55, 2)--(-2.45, 2)--(-2.45, -2)--(-11.55, -2);
\draw  [gray!20, fill=gray!20] (-11.5, 2) arc (90:270:0.5cm and  2cm);
\draw  [gray!20, fill=gray!20] (-2.5, -2) arc (-90:90:0.5cm and  2cm);

\draw (-12 ,0) arc (180:360:0.5cm and  2cm);
\draw (-12 ,0) arc (180:0:0.5cm and  2cm);
\draw[thick] (-11.5 , 2) to (-2.5, 2);
\draw[thick] (-11.5 , -2) to (-2.5, -2);

\draw  [dashed](-3  ,0) arc (180:360:0.5cm and  2cm);
\draw  [dashed](-3  ,0) arc (180:0:0.5cm and  2cm);
\draw  (-2.5 ,-2) arc (-90:90:0.5cm and  2cm);

\draw [thick, ->] (-10, -3.3) to (-5, -3.3); 
\node at (-4.5, -3.3) {$s$};

\draw  [thick, ->](-14 ,0) arc (180:360:0.5cm and  2cm);
\draw  [thick](-14 ,0) arc (180:30:0.5cm and  2cm);
\node at (-13, 0.4) {$t$};

\node at (-11.5, 2.7) {$\gamma_-$};
\node at (-2.5, 2.7) {$\gamma_+$};

  \end{scope}

 \end{tikzpicture}
%  \caption{ }
\end{subfigure}
\begin{subfigure}{0.45\textwidth}
  \centering
\begin{tikzpicture}[scale=0.35]
   \begin{scope}[>=latex]

%gray filling
\draw [gray!20, fill=gray!20] (3, 2)--(13, 2)--(13,-2)--(3,-2);
 
\node at (8, 2.7) {$\color{red}{\widehat L}$};
\node at (8, -1.3) {$\color{red}{\widehat L}$};
\node at (3.9, 0) {$x_-$};
\node at (13.9, 0) {$x_+$};

\draw (3, -2) to (3,2);
\draw[thick, red] (13, -2) to (3,-2);
\draw (13, -2) to (13,2);
\draw[thick, red] (13, 2) to (3,2);

\draw [thick, ->] (5.5, -3.3) to (10.5, -3.3); 
\node at (11, -3.3) {$s$};
\draw [thick, ->] (2, -2) to (2,2); 
\node at (1.5, 0) {$t$};

  \end{scope}

\end{tikzpicture}

 % \caption{ }
\end{subfigure}

	\caption{A Floer cylinder  (left) and strip (right)}
	\label{fig: cylandstr}
\end{figure}

\begin{proposition}
Let $\gamma_- \neq \gamma_+$. For generic $J_{S^1}$, the moduli space $\mathcal{M}(\gamma_-, \gamma_+, H_{S^1}, J_{S^1})$ is a smooth manifold of dimension $|\gamma_-| - |\gamma_+|-1$.
\end{proposition}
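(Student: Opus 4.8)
The plan is to run the standard Floer-theoretic transversality argument for cylinders with distinct asymptotic orbits, keeping $H_{S^1}$ fixed and perturbing only $J_{S^1}$. First I would set up the functional-analytic framework: fix $p>2$ and a small exponential weight $\delta>0$ at the two cylindrical ends, and let $\mathcal{B}=\mathcal{B}^{1,p}_\delta(\gamma_-,\gamma_+)$ be the Banach manifold of $W^{1,p}_{\mathrm{loc}}$ maps $u\colon \R\times S^1\to\widehat W$ converging in the weighted sense to $\gamma_\pm$ as $s\to\pm\infty$, with tangent space $W^{1,p}_\delta(u^*T\widehat W)$ after a choice of cutoff reference solution. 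The assignment $u\mapsto (du-X_{H_{S^1}}\otimes dt)^{0,1}$ is a smooth section $\bar\partial_{J_{S^1}}$ of the Banach bundle $\mathcal{E}\to\mathcal{B}$ with fibre $L^p_\delta(\Lambda^{0,1}\otimes u^*T\widehat W)$, and $\widetilde{\mathcal{M}}(\gamma_-,\gamma_+,H_{S^1},J_{S^1}):=\bar\partial_{J_{S^1}}^{-1}(0)$ is the moduli space before taking the $\R$-quotient. The vertical linearization $D_u$ is a Cauchy--Riemann type operator; by ellipticity together with the nondegeneracy of $\gamma_\pm$ and positivity of $\delta$ it is Fredholm, and the Riemann--Roch / spectral-flow computation (see \cite{BO_MB}) identifies its index with $|\gamma_-|-|\gamma_+|$, the expected dimension of $\widetilde{\mathcal{M}}$.

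Next I would form the universal moduli space $\widetilde{\mathcal{M}}^{\mathrm{univ}}=\{(u,J_{S^1}) : \bar\partial_{J_{S^1}}u=0\}$, where $J_{S^1}$ ranges over a separable Banach manifold $\mathcal{J}$ of admissible $S^1$-families of compatible almost complex structures (for instance the $C^\varepsilon$-completion of perturbations of a fixed admissible family, agreeing with it near infinity). The crucial point is that the extended linearization $(\xi,\dot J)\mapsto D_u\xi+\tfrac12\dot J(u)\,du\,j$ is surjective at every $(u,J_{S^1})$. One argues by contradiction: an element $\eta$ of the cokernel lies in $\ker D_u^*$, hence is smooth with isolated zeros by unique continuation, and is also $L^q$-orthogonal to all perturbations $\dot J$; taking $\dot J$ supported near a suitable point of $u$ then forces $\eta$ to vanish there, a contradiction. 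This is where the hypothesis $\gamma_-\neq\gamma_+$ is indispensable, and I expect it to be the main obstacle: one must produce a \emph{regular} (injective) point of $u$, i.e.\ a point $z_0=(s_0,t_0)$ with $\partial_s u(z_0)\neq 0$, $u(z_0)$ not lying on $\gamma_-$ or $\gamma_+$, and $u(z_0)\notin u(\R\setminus\{s_0\},t_0)$. Since $\gamma_-\neq\gamma_+$ the solution $u$ cannot be $s$-independent, so $\partial_s u\not\equiv 0$ and its zero set is discrete by the similarity principle; the Floer--Hofer--Salamon type argument then shows the regular points are open and dense in $\R\times S^1$. Combining unique continuation with the analysis of the ``bad'' set where $u$ fails to be locally injective is the technically delicate part.

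Granting surjectivity of the extended linearization, $\widetilde{\mathcal{M}}^{\mathrm{univ}}$ is a Banach manifold and the projection $\pi\colon\widetilde{\mathcal{M}}^{\mathrm{univ}}\to\mathcal{J}$ is smooth and Fredholm of index $|\gamma_-|-|\gamma_+|$. By the Sard--Smale theorem its regular values form a comeager subset $\mathcal{J}_{\mathrm{reg}}\subset\mathcal{J}$, and for $J_{S^1}\in\mathcal{J}_{\mathrm{reg}}$ the fibre $\pi^{-1}(J_{S^1})=\widetilde{\mathcal{M}}(\gamma_-,\gamma_+,H_{S^1},J_{S^1})$ is a smooth manifold of dimension $|\gamma_-|-|\gamma_+|$; elliptic regularity makes its elements smooth maps, and a standard argument (intersecting countably many comeager sets and passing from the $C^\varepsilon$-space back to genuinely smooth families) yields a comeager set of admissible $J_{S^1}$ for which this holds. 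Finally, because $\gamma_-\neq\gamma_+$ every element of $\widetilde{\mathcal{M}}$ is non-constant in the $s$-variable, so the $\R$-translation action is free and proper; hence the quotient $\mathcal{M}(\gamma_-,\gamma_+,H_{S^1},J_{S^1})=\widetilde{\mathcal{M}}/\R$ is a smooth manifold of dimension $|\gamma_-|-|\gamma_+|-1$, as claimed.
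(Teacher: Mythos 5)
Your proposal is correct and is precisely the standard Floer--Hofer--Salamon transversality argument (Banach-manifold setup, universal moduli space, surjectivity via regular/injective points of non-$s$-independent cylinders, Sard--Smale, and the free $\R$-action), which is exactly what the paper relies on: it states the proposition without proof and defers to the cited literature on symplectic homology. No gaps; the only cosmetic remark is that the exponential weights $\delta>0$ are optional here since nondegeneracy of $\gamma_\pm$ already gives Fredholmness in unweighted $W^{1,p}$.
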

\begin{remark}\label{rem: noperiodcontactregion}
Since $H_{S^1}$ and $J_{S^1}$ are admissible, Floer trajectories must lie in a compact region in $\widehat W$ by a maximum principle. 
\end{remark}
The differential $\p: \CF_k(H_{S^1}, J_{S^1}) \rightarrow \CF_{k-1}(H_{S^1}, J_{S^1})$ is defined by counting rigid Floer trajectories between 1-periodic orbits as follows:
\begin{equation}\label{eq: SHdifferential}
\p (\gamma_-) = \sum_{\substack{\gamma_+ \in \mathcal{P}(H_{S^1}) \\ |\gamma_+| = k-1}} \#_2 \mathcal{M}(\gamma_-, \gamma_+, H_{S^1}, J_{S^1}) \gamma_+.
\end{equation}
The Floer--Gromov compactness and the gluing construction in Floer theory show that $\p^2 = 0$, and hence we obtain the Floer chain complex $(\CF_*(H_{S^1}, J_{S^1}), \p)$. The Floer homology $\HF_*(H_{S^1}, J_{S^1})$ of the pair $(H_{S^1}, J_{S^1})$ is defined by
$$
\HF_*(H_{S^1}, J_{S^1}) = \Ho_*(\CF_*(H_{S^1}, J_{S^1}), \p).
$$
\subsubsection{Symplectic homology} Standard continuation maps in Hamiltonian Floer homology define a direct system of Floer homology groups $\HF_*(H_{S^1}, J_{S^1})$ directed by increasing the slope $
\tau$ of Hamiltonains. See e.g.\ \cite[Section 4.4]{CiOa18}. The \emph{symplectic homology} of the Liouville domain $(W, \lda)$ is defined to be the direct limit
$$
\SH_*(W, \lda) = \varinjlim_{\tau \rightarrow \infty} \HF_*(H_{S^1}, J_{S^1}).
$$

%\subsubsection{Ring structure} The symplectic homology $\SH_*(W)$ admits a natural commutative ring structure with unity
%$$
%\SH_k(W) \times \SH_l(W) \rightarrow \SH_{k+l+n}(W)
%$$ 
%defined by counting Floer pair-of-pants. We refer the reader to [Ritter] for details. In our grading convention, the unit element $1_W$ lies in $\SH_n(W)$.   

\subsubsection{Action filtration} For an admissible Hamiltonian $H_{S^1}$ we have the associated    action functional $\mathcal{A}_{H_{S^1}} : \mathcal{L}\widehat W \rightarrow \R$ on the free loop space $ \mathcal{L}\widehat W$ of the completion $\widehat W$ given by
$$
\mathcal{A}_{H_{S^1}}(\gamma) = -\int_{S^1} \gamma^*\lda -\int_{0}^1 H_{S^1}(t, \gamma(t)) dt.
$$
We call the value $\mathcal{A}_{H_{S^1}}(\gamma)$ the \emph{action} of $\gamma$.  Since Floer trajectories decrease  action values, we obtain an action filtration on Floer chain complexes  by collecting generators of action less than $a \in \R$ 
$$
\CF_k^a(H_{S^1}, J_{S^1}) = \bigoplus_{\substack{\gamma \in \mathcal{P}(H_{S^1}) \\ |\gamma| = k \\ \mathcal{A}_{H_{{S^1}}}(\gamma) < a}} \Z_2 \langle \gamma \rangle.
$$
The corresponding filtered Floer homology is denoted by $\HF_*^a(H_{S^1}, J_{S^1})$, and taking the direct limit we define the \emph{filtered symplectic homology}
$$
\SH_*^a(W, \lda) = \varinjlim_{\tau \rightarrow \infty} \HF_*^a(H_{S^1}, J_{S^1}).
$$
\subsubsection{Tautological exact sequences}\label{sec: tauexseqSH} Let $a < b$. The action filtration on the chain complex $\CF_*(H_{S^1}, J_{S^1})$ induces the following natural short exact sequence of chain complexes:
$$
0 \rightarrow \CF_*^a(H_{S^1}, J_{S^1}) \rightarrow \CF_*^b(H_{S^1}, J_{S^1}) \rightarrow \CF_*^{[a, b)}(H_{S^1}, J_{S^1}) \rightarrow 0
$$
where $\CF_*^{[a, b)}(H_{S^1}, J_{S^1})$ is the chain complex defined to be the quotient 
$$
\CF_*^{[a, b)}(H_{S^1}, J_{S^1}) = \CF_*^b(H_{S^1}, J_{S^1}) / \CF_*^a(H_{S^1}, J_{S^1})
$$ 
with the induced differential. We obtain, passing to the direct limit, an associated long exact sequence in symplectic homology
\begin{equation}\label{eq: leqSH}
\rightarrow \SH_k^a(W) \rightarrow \SH_k^b(W) \rightarrow \SH_k^{[a, b)}(W)\rightarrow \SH_{k-1}^a(W) \rightarrow.
\end{equation}
In particular, due to the assumption that $H_{S^1}$ is $C^2$-small and Morse on $W$, if $\epsilon > 0$ sufficiently small, we have a canonical identification
\begin{equation}\label{eq: SHepsilonsmall}
\SH_k^{\epsilon}(W) \cong \Ho_{k+n}(W, \p W).
\end{equation} 
We then have the \emph{(filtered) tautological exact sequence} in symplectic homology
$$
\rightarrow \Ho_{k+n}(W, \p W) \rightarrow \SH_k^a(W) \rightarrow \SH_k^{[\epsilon, a)}(W)\rightarrow \Ho_{k+n-1}(W, \p W) \rightarrow.
$$
For each $a >0$, we shall denote the map from $\Ho_{k+n}(W, \p W)$ to $\SH_k^a(W)$ in the sequence by 
$$
j^a : \Ho_{k+n}(W, \p W) \rightarrow \SH_k^a(W).
$$
 
%The map $c_* : H_{k+n}(W, \p W) \rightarrow \SH_k(W)$ is of particular interest (here $b = \infty$), and has the following property. See [Ritter].
%\begin{proposition}
%The map $c_* : H_{2n}(W, \p W) \rightarrow \SH_n(W)$ in the tautological exact sequence sends the fundamental class $[W, \p W] \in H_{2n}(W, \p W)$ to the unit element $1_W \in \SH_n(W).$
%\end{proposition}

\subsection{Wrapped Floer homology} We shortly review a construction of wrapped Floer homology which is an open string analogue of symplectic homology. We refer to \cite{AboSei,KKK} for details. 

\subsubsection{Chain complex}\label{sec: wrappedchaincomplex} Let $L$ be an \emph{admissible} Lagrangian in a Liouville domain $(W, \lda)$, meaning that $L$ is a connected and exact Lagrangian which intersects the contact boundary $(\Sigma, \alpha)$ in a  Legendrian $\mathcal{L} := \p L = L \cap \Sigma$ and the Liouville vector field is tangent to $TL$ along the boundary. By attaching $[1, \infty) \times \mathcal{L}$ to  $L$ along the Legendrian boundary $\mathcal{L}$ we have a completed exact Lagrangian $\widehat L$ in the completion $(\widehat W, \widehat \lda)$. Roughly speaking, the wrapped Floer homology $\HW_*(L)$ is a version of Lagrangian Floer homology of $\widehat L$ in $\widehat W$. 

A time-independent Hamiltonian $H : \widehat W \rightarrow \R$ is called \emph{admissible} if every Hamiltonian 1-chord relative to $\widehat L$ is non-degenerate, $H$ is negative and $C^2$-small in the interior of $W \subset \widehat W$, and $H$ is linear at the end with respect to $r \in [1, \infty)$. We assume that the slope $\tau$ of $H$ is positive and is not equal to the length of a Reeb chord in $(\Sigma, \alpha, \mathcal{L})$. Recall that a \emph{Reeb chord} in $(\Sigma, \alpha, \mathcal{L})$ is an orbit $x: [0, T] \rightarrow \Sigma$ of the Reeb flow on $(\Sigma, \alpha)$ with $x(0), x(T) \in \mathcal{L}$. We call $T$ the \emph{length} of the Reeb chord $x$.

Denote the set of contractible, as an element of $\pi_1(\widehat W, \widehat L)$, Hamiltonian 1-chords by $\mathcal{P}_L(H)$. We associate the index $|x|=-\mu(x)-\frac{n}{2} \in \Z$ for each non-degenerate contractible 1-chord in $\mathcal{P}_L(H)$, where $\mu(x)$ is the Maslov index defined in \cite[Definition~2.3]{KKK}. Assume that $c_1(TW)=0$ and $\pi_1(L) = 0$ for well-definedness of~$\mu(x)$. 

%{\color{blue}
%\begin{remark}
%The applications in Section \ref{sec: Floerhomologycapacities} do not require the grading. The topological conditions on $W$ and $L$ are therefore not necessary for applications. See Section \ref{sec: nograding}.
%\end{remark}
%}

\begin{example}\label{ex: admLagconditions}
Consider the complex conjugation $\rho_0$ on the closed ball $B^{2n}$.  Its fixed point set $L = \Fix(\rho_0) = B^{2n} \cap \R^n$, called a real Lagrangian, defines an admissible Lagrangian in $(B^{2n}, \lda_0)$.
More generally, let $W$ be a starshaped domain in $\R^{2n}$ invariant under an exact anti-symplectic involution $\rho$ of $(\R^{2n},\lambda_0)$ i.e. $\rho^*\lda_0 = -\lda_0$. Then the fixed point set $L : = \Fix (\rho|_W)$ defines an admissible Lagrangian:
\begin{itemize}
\item From the classical Smith theory, we have $\dim \Ho_*(W;\Z_2) \geq \dim \Ho_*(L; \Z_2)$ and $\chi(W) = \chi(L) \mod 2$. It follows that $L$ is nonempty and connected. 
\item Since the Liouville flow on $(\R^{2n}, \lda_0)$ commutes with $\rho$ and flows radially from the origin, the real Lagrangian $L$ intersects the boundary $\p W$.
\item As in \cite[Lemma 3.1]{KKL18}, $L$ is an exact Lagrangian, the intersection $L \cap \p W$ is a Legendrian, and the Liouville vector field is tangent to $TL$ along the boundary. 
\end{itemize}
Note also that $c_1(TW) = 0$ for any starshaped domain $W$ whereas $\pi_1(L)$ is not necessarily a trivial group. If the anti-symplectic involution $\rho$ is \emph{linear} e.g. the complex conjugation, then $L$ is diffeomorphic to the closed ball $B^n$ and hence $\pi_1(L)=0$ in this case.
\end{example}

Let $J = \{J_t\}_{t \in [0, 1]}$ be an admissible time-dependent family of compatible almost complex structures. % See \cite{KKK} for the definition. 
For two distinct 1-chords $x_{\pm} \in \mathcal{P}_L(H)$, the moduli space $\mathcal{M}(x_-, x_+, H, J)$ of \emph{Floer strips} from $x_-$ to $x_+$, modulo the natural $\R$-action, is defined by 
\begin{align}\label{eq: HWmoduli}
\begin{split}
\mathcal{M}(x_-, x_+, H, J) = \{u: \R \times [0, 1] \rightarrow \widehat W \;|\; &\lim_{s \rightarrow \pm \infty} u(s, t) = x_{\pm}(t), \\ &(du - X_{H}\otimes dt )^{0,1} = 0, \; \\&u(s, 0), u(s, 1) \in \widehat L \} / \R.
\end{split}
\end{align}
See the right in Figure \ref{fig: cylandstr}. For generic $J$, the moduli space $\mathcal{M}(x_-, x_+, H, J)$ is a smooth manifold of dimension $|x_-| - |x_+|-1$.

The Floer chain complex for the pair $(H, J)$ is defined by
$$
\CF_k(H, J) = \bigoplus_{\substack{x \in \mathcal{P}_L(H) \\ |x| = k}} \Z_2 \langle  x \rangle
$$
equipped with the differential $\p : \CF_k(H, J) \rightarrow \CF_{k-1}(H, J)$ given by
\begin{equation}\label{eq: HWdiff}
\p (x_-) = \sum_{\substack{x_+ \in \mathcal{P}_L(H)\\ |x_+| = k-1}} \#_2 \mathcal{M}(x_-, x_+, H, J) x_+.
\end{equation}
We obtain the Floer homology group $\HF_*(H, J)$ as the homology of the chain complex $(\CF_*(H, J), \p)$, and by taking the direct limit as in the symplectic homology, we define the \emph{wrapped Floer homology} of the Lagrangian $L$ in $(W, \lda)$ by
$$
\HW_*(L) = \varinjlim_{\tau \rightarrow \infty}  \HF_*(H, J).
$$

%The wrapped Floer homology $\HW_*(L)$ admits a ring structure with unity, 
%$$
%\HW_k(L) \times \HW_l(L) \rightarrow \HW_{k+l}(L)
%$$
%which is non-necessarily commutative. The unit is denoted by $1_L \in \HW_0(L)$.

\subsubsection{Filtered wrapped Floer homology}\label{sec: tauexseqHW} Wrapped Floer homology shares many analogous properties with symplectic homology. In particular, we have a natural action filtration and tautological exact sequences. The action filtration on $\HW_*(L)$ is given by the action functional $\mathcal{A}_H : \mathcal{L}_L \widehat W \rightarrow \R$ on the free path space  $\mathcal{L}_{L}\widehat W$ of the completion $\widehat W$ relative to $\widehat L$, defined by
$$
\mathcal{A}_{H}(x) = -\int_{[0,1]} x^*\lda -\int_{0}^1 H(x(t)) dt + f_L(x(1)) - f_L(x(0)).
$$
Here $f_L \in C^{\infty}(\widehat{L}, \R)$ is a primitive of the form $\lda|_{\widehat L}$.
For $a \in \R$, we denote the filtered chain complex by $\CF_*^a(H, J)$ and the filtered wrapped Floer homology by $\HW_*^a(L)$. For $a < b$ a long exact sequence analogous to \eqref{eq: leqSH} is written as
\begin{equation}\label{eq: taulesHW}
\rightarrow \HW^a_k(L) \rightarrow \HW_k^b(L) \rightarrow \HW_k^{[a, b)}(L)\rightarrow \HW^a_{k-1}(L) \rightarrow.
\end{equation}
In particular, for $\epsilon >0$ sufficiently small so that
\begin{equation}\label{eq: HWepsilonsmall}
\HW_k^{\epsilon}(L) \cong \Ho_{k+n}(L, \p L)
\end{equation}
we have the tautological long exact sequence in wrapped Floer homology
$$
\rightarrow \Ho_{k+n}(L,\p L) \rightarrow \HW_k^a(L) \rightarrow \HW_k^{[\epsilon, a)}(L)\rightarrow \Ho_{k+n-1}(L, \p L) \rightarrow.
$$
For each $a > 0$, as in symplectic homology, we denote the map from $\Ho_{k+n}(L,\p L)$ to $\HW_k^a(L)$ in the sequence by
$$
j^a: \Ho_{k+n}(L,\p L) \rightarrow \HW_k^a(L).
$$
 
%\begin{proposition}
%The map $c_* : H_n(L, \p L) \rightarrow \HW_0(L)$ in the tautological exact sequence with $b = \infty$ sends the fundamental class $[L, \p L] \in H_n(L, \p L)$ to the unit element $1_L \in \HW_0(L).$
%\end{proposition}

\subsection{Closed-open maps} \label{sec: closedopenmaps}Closed-open maps are natural homomorphisms from symplectic homology to wrapped Floer homology.  In   Section \ref{sec: Floerhomologycapacities} we    use them to relate  symplectic capacities from  the two Floer homologies. In this section we shall briefly outline a construction of closed-open maps based on \cite{Abouzaid, Ganatra, RitterIvan}. See also \cite{Al08}.%[Section 5]{Al08}.

\subsubsection{Floer data} Closed-open maps are defined by counting curves in $\widehat W$ which we call \emph{Floer chimneys}.  The domain $\mathcal{T}$ of Floer chimneys is given by the closed unit disk $\D$ with an interior puncture and a boundary puncture,
$$
\mathcal{T}= (\D \setminus \{0, 1\}, i)
$$ 
where $i$ is the standard complex structure. See the left in Figure \ref{fig: chim}. We equip $\mathcal{T}$  a negative cylindrical end $\varepsilon_0: (-\infty, 0]  \times S^1 \rightarrow \mathcal{T}$ near $0$ and a positive strip-like end $\varepsilon_1: [0, \infty) \times  [0, 1]  \rightarrow \mathcal{T}$ near 1.

%\begin{figure}
   %\centering
%\begin{tikzpicture}[scale=0.4]

%\draw [fill=gray!20] (0,0) circle (3cm);
%inner circle
%\draw [dashed, black, fill=blue!30] (0,0) circle (0.5cm);
%brown
%\draw [dashed, fill=brown!30] (3, 0.5) arc (100:260: 0.5cm and 0.5 cm);
%outer red circle
%\draw (0,0) circle (3cm);

%holes
%\draw [fill=white](3,0) circle (0.1cm);
%\draw [fill=white](0,0) circle (0.1cm);

%\node at (0, -1) {$H_{S^1}$};
%\node at (2.2, 0.5) {$H$};
 %\end{tikzpicture}

	%\caption{Admissible Hamiltonians for Floer chimneys}
	%\label{fig: chimadmham}
%\end{figure}

A Floer data $(H_{\mathcal{T}}, J_{\mathcal{T}})$ for chimneys is given as follows. Let $H_{S^1}: S^1 \times \widehat W \rightarrow \R$ and $H: \widehat W \rightarrow \R$  be admissible Hamiltonians for symplectic homology and wrapped Floer homology, respectively, of the same slope $\tau$. A Hamiltonian $H_{\mathcal{T}}: \mathcal{T} \times \widehat W \rightarrow \R$ is called \emph{admissible} if  
\begin{itemize}
\item $H_{\mathcal{T}}(\varepsilon_0 (s, t), w) = H_{S^1}(t, w)$;
\item $H_{\mathcal{T}}(\varepsilon_1 (s, t), w) = H(w)$;
\item for each $z \in \mathcal{T}$, the Hamiltonian $H_{\mathcal{T}}(z, \cdot): \widehat W \rightarrow \R$ is admissible with slope~$\tau$ and is independent of $z$ at the end. We call $\tau$ the \emph{slope} of $H_{\mathcal{T}}$.
\end{itemize}
For admissible almost complex structures $J_{S_1}$ and $J$ as in Section \ref{sec: symplecticchaincomplex} and \ref{sec: wrappedchaincomplex}, we take an admissible $\mathcal{T}$-family of compatible almost complex structures $J_{\mathcal{T}}$ given in an analogous way to the Hamiltonian case so that $J_{\mathcal{T}} = J_{S^1}$ and $J_{\mathcal{T}} = J$ near the punctures.

\subsubsection{Floer chimneys} \label{sec: Floerchimneys}

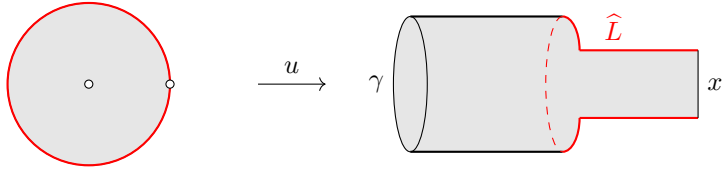
\begin{figure}[h]
   \centering
\begin{tikzpicture}[scale=0.45]

\begin{scope}[scale=0.8]
\draw [fill=gray!20] (0,0) circle (3cm);
%outer red circle
\draw[thick, red] (0,0) circle (3cm);

%holes
\draw [fill=white](3,0) circle (0.15cm);
\draw [fill=white](0,0) circle (0.15cm);
\end{scope}

\begin{scope}[xshift=14cm]
	%gray fill-in
\draw [gray!20, fill=gray!20] (-4.5,2)--(0,2)--(0.5,1)--(4,1)--(4,-1)--(0.5,-1)--(0,-2)--(-4.5,-2)--(-4.5,2);
\draw [gray!20, fill=gray!20] (-4.5,2) arc (90:270:0.5cm and 2cm);

%left ellipse
\draw (-4.5, 2) arc (90:270:0.5cm and  2cm);
%\draw [blue!30, fill=blue!30] (-4.53, 2)--(-4.05,2)--(-4.05,-2)--(-4.53,-2);

\draw  [gray!20,fill=gray!20](-4, 2) arc (90:270:0.5cm and  2cm);

\draw  (-4.5, -2) arc (-90:90:0.5cm and  2cm);

%left lines
\draw [thick] (-4.5, 2)--(0, 2);
\draw [thick] (-4.5, -2)--(0, -2);

%right ellipses
\draw  [thick, red, fill=gray!20](0.5, 1) arc (0:90:0.5cm and  1cm);
\draw  [thick, red, fill=gray!20](0.5 ,-1) arc (0:-90:0.5cm and  1cm);
\draw  [dashed, red](0 ,2) arc (90:270:0.5cm and  2cm);
%right lines
\draw [thick, red] (0.5, 1)--(4, 1);
\draw [thick, red] (0.5, -1)--(4, -1);
%right vertical
\draw (4,-1)--(4,1);

\draw[->] (-9,0) to (-7,0);

\node at (-8, 0.5) {$u$};
\node at (-5.5, 0) {$\gamma$};
\node at (4.5, 0) {$x$};
\node at (1.5, 1.7) {${\color{red} \widehat L}$};
\end{scope}

 \end{tikzpicture}
	\caption{A Floer chimney}
	\label{fig: chim}
\end{figure}

%\begin{figure}
%	\centering
%	\begin{overpic}[width=260pt,clip]{chim.pdf} 
%	\end{overpic}
%	\caption{Floer chimneys}
%	\label{fig: chim}
%\end{figure}

To write the Floer equation for chimneys, we fix a 1-form $\beta$ on $\mathcal{T}$ with the following properties.
\begin{itemize}
\item $d\beta \leq 0$ with respect to the fixed volume form on $\mathcal{T}$.
\item $\beta|_{\p \mathcal{T}} =0$, and $\beta|_{\nu(\p \mathcal{T})} = 0$ where $\nu(\p \mathcal{T})$ is a neighborhood of $\p \mathcal{T}$.
\item With respect to the coordinate charts $\varepsilon_0$ and $\varepsilon_1$, we set $\beta = dt$.
\end{itemize}

\begin{remark}
The conditions on $\beta$ guarantee that Floer chimneys stay in a compact region in $\widehat W$. One can show this using a convexity argument in \cite[Lemma 7.2]{AboSei}, which replaces the maximum principle.
\end{remark}
Take $\gamma \in \mathcal{P}(H_{S^1})$ and $x \in \mathcal{P}_L(H)$. A \emph{Floer chimney from $\gamma$ to $x$} is a map $u: \mathcal{T} \rightarrow \widehat W$ satisfying the following conditions, see Figure \ref{fig: chim}.
\begin{itemize}
\item (Floer equation) $u$ is a solution of the equation
$$
(du - X_{H_{\mathcal{T}}} \otimes  \beta)^{0, 1} = 0.
$$
\item (Asymptotic condition)
$$
\lim_{s \rightarrow -\infty} u(\varepsilon_0(s, t)) = \gamma(t), \quad \lim_{s \rightarrow \infty} u(\varepsilon_1(s, t)) = x(t).
$$
\item (Lagrangian boundary) $u(\p \mathcal{T}) \subset \widehat L$.
\end{itemize}
We denote the moduli space of Floer chimneys from $\gamma$ to $x$ by
\begin{equation}\label{eq: COmoduli}
\mathcal{M}(\gamma, x, H_{\mathcal{T}}, J_{\mathcal{T}}) =\{u: \mathcal{T} \rightarrow \widehat W \;|\; \text{$u$ is a Floer chimney from $\gamma$ to $x$}\}.
\end{equation}
\begin{proposition}[{See \cite[Lemma 5.3]{Abouzaid}}]
For generic $J_{\mathcal{T}}$, the moduli space $\mathcal{M}(\gamma, x, H_{\mathcal{T}}, J_{\mathcal{T}})$ is a smooth manifold of dimension
$$
\dim \mathcal{M}(\gamma, x, H_{\mathcal{T}}, J_{\mathcal{T}}) = |\gamma| - |x| -n.
$$
\end{proposition}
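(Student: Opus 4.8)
The plan is to run the standard Fredholm--and--transversality package for Floer-type operators with Lagrangian boundary, exactly as in \cite[Lemma 5.3]{Abouzaid} (see also \cite{RitterIvan, AboSei, Al08}); I only indicate the structure. First I would fix $\gamma \in \mathcal{P}(H_{S^1})$ and $x \in \mathcal{P}_L(H)$ and, after the usual change of trivialisation near the two punctures that absorbs the term $X_{H_{\mathcal{T}}}\otimes\beta$, set up a Banach manifold $\mathcal{B}^{1,p}_\delta(\gamma,x)$ of $W^{1,p}_{\mathrm{loc}}$ maps $u\colon\mathcal{T}\to\widehat W$ with $u(\p\mathcal{T})\subset\widehat L$ that converge exponentially (weight $\delta>0$ small, $p>2$) to $\gamma$ along the negative cylindrical end $\varepsilon_0$ and to $x$ along the positive strip-like end $\varepsilon_1$. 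The map $u\mapsto (du-X_{H_{\mathcal{T}}}\otimes\beta)^{0,1}$ is a smooth section of the Banach bundle with fibre $L^p_\delta(\mathcal{T},\Omega^{0,1}\otimes u^*T\widehat W)$, its zero set is $\mathcal{M}(\gamma,x,H_{\mathcal{T}},J_{\mathcal{T}})$, and elliptic regularity makes every solution smooth. Note that $\mathcal{T}=\D\setminus\{0,1\}$ has no nontrivial automorphisms fixing both punctures, so there is no quotient to take.

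Next I would verify that the linearisation $D_u$ at a solution is Fredholm: it is a real-linear Cauchy--Riemann operator on $\mathcal{T}$ with totally real boundary condition $T\widehat L$ whose asymptotic operators at the interior puncture $0$ and at the boundary puncture $1$ are the linearised Hamiltonian flows along $\gamma$ and $x$; non-degeneracy of $\gamma$ and $x$ together with the small exponential weight makes these asymptotic operators invertible, whence Fredholmness by the standard patching of the model operators on the ends with a compact-surface piece. For the index I would invoke the Riemann--Roch/index-additivity formula for punctured surfaces with totally real boundary: the contributions are the Conley--Zehnder index $\CZ(\gamma)$ at the interior puncture, the Maslov index $\mu(x)$ at the boundary puncture, and a constant determined by the topology of $\mathcal{T}$ and of the boundary condition. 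The standing hypotheses $c_1(TW)|_{\pi_2(W)}=0$, $\pi_1(L)=0$, and contractibility of $\gamma$ and $x$ guarantee these indices are well defined and that the formula is independent of auxiliary choices; feeding in the grading normalisations $|\gamma|=-\CZ(\gamma)$ and $|x|=-\mu(x)-\tfrac{n}{2}$ from Sections \ref{sec: symplecticchaincomplex} and \ref{sec: wrappedchaincomplex} (cf.\ \cite{KKK}) then yields $\ind D_u=|\gamma|-|x|-n$.

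Finally I would establish transversality by the Sard--Smale argument applied to the universal moduli space over the space of admissible $\mathcal{T}$-families $J_{\mathcal{T}}$ --- those equal to $J_{S^1}$ and $J$ near the respective punctures and of contact type near the end of $\widehat W$, but unconstrained on an open region $U\subset\mathcal{T}\times\widehat W$. Surjectivity of the universal linearised operator is the usual argument: a nonzero cokernel element is $L^2$-orthogonal to every infinitesimal variation of $J_{\mathcal{T}}$ supported in $U$ and hence, by unique continuation, vanishes on an open set and therefore identically, provided $u$ possesses an injective point lying in $U$. The main obstacle is exactly this somewhere-injectivity input: one must rule out that a Floer chimney is swept entirely into the fixed region --- near the punctures, along $\widehat L$, or in the symplectisation collar --- where $J_{\mathcal{T}}$ is prescribed, so that an honest injective point is available for the perturbation. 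This is the one genuinely technical point, handled as in \cite[Lemma 5.3]{Abouzaid}; granting it, Sard--Smale produces a residual set of $J_{\mathcal{T}}$ for which $\mathcal{M}(\gamma,x,H_{\mathcal{T}},J_{\mathcal{T}})$ is a smooth manifold of dimension $\ind D_u = |\gamma|-|x|-n$, as claimed.
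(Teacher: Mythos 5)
Your outline is correct and is essentially the argument behind the cited result: the paper itself offers no proof of this proposition beyond the reference to \cite[Lemma 5.3]{Abouzaid}, and your Banach-manifold setup, Fredholm/index computation at the two punctures, and Sard--Smale transversality (with the perturbation region and injectivity issue correctly flagged as the only delicate point) is exactly the standard package used there. The only place you wave hands is the constant in the index formula, which you assert rather than compute from the Riemann--Roch contribution of the boundary condition, but that is consistent with the level of detail the paper itself adopts.
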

If $|\gamma| = |x| + n$, the moduli space is  compact and   zero dimensional. This allows us to define the map
$$
\mathcal{CO}: \CF_k(H_{S^1}, J_{S^1}) \rightarrow \CF_{k-n}(H, J)
$$
by counting rigid Floer chimneys
\begin{equation}\label{eq: COchainmap}
\mathcal{CO}(\gamma) = \sum_{\substack{x \in \mathcal{P}_L(H) \\ |x| = k-n}} \#_2 \mathcal{M}(\gamma, x, H_{\mathcal{T}}, J_{\mathcal{T}})  x.
\end{equation}
The codimension 1 boundary strata of the moduli space of Floer chimneys, described in \cite[Lemma 5.3]{Abouzaid}, shows that the map $\mathcal{CO}: \CF_k(H_{S^1}, J_{S^1}) \rightarrow \CF_{k-n}(H, J)$ is a chain map; see Figure \ref{fig: brochim}. We have the induced homomorphism on homology groups
$$
\mathcal{CO}: \HF_k(H_{S^1}, J_{S^1}) \rightarrow \HF_{k-n}(H, J).
$$
\begin{figure}
\centering
	\begin{tikzpicture}[scale=0.5]
	%left drawing
	\begin{scope}[xshift=-12cm]
			%gray fill-in
\draw [gray!20, fill=gray!20] (-4.5,2)--(0,2)--(0.5,1)--(4,1)--(4,-1)--(0.5,-1)--(0,-2)--(-4.5,-2)--(-4.5,2);
\draw [gray!20, fill=gray!20] (-4.5,2) arc (90:270:0.5cm and 2cm);
\draw [white, fill=white] (-2.95,2)--(-1.55,2)--(-1.55,-2)--(-2.95,-2)--(-2.95,2);
\draw  [gray!20,fill=gray!20](-3, -2) arc (-90:90:0.5cm and  2cm);

\draw  [gray!20,fill=gray!20](-1.5, 2) arc (90:270:0.5cm and  2cm);

%left ellipse
\draw (-4.5, 2) arc (90:270:0.5cm and  2cm);
\draw  [gray!20,fill=gray!20](-4, 2) arc (90:270:0.5cm and  2cm);
\draw  (-4.5, -2) arc (-90:90:0.5cm and  2cm);

%left ellipse 2
\draw [dashed] (-3, 2) arc (90:270:0.5cm and  2cm);
\draw (-3, -2) arc (-90:90:0.5cm and  2cm);

%left ellipse 3
\draw (-1.5, 2) arc (90:270:0.5cm and  2cm);
\draw  (-1.5, -2) arc (-90:90:0.5cm and  2cm);

%left lines
\draw [thick] (-4.5, 2)--(-3, 2);
\draw [thick] (-4.5, -2)--(-3, -2);

%left lines 2
\draw [thick] (-1.5, 2)--(0, 2);
\draw [thick] (-1.5, -2)--(0, -2);

%right ellipses
\draw  [thick, red, fill=gray!20](0.5, 1) arc (0:90:0.5cm and  1cm);
\draw  [thick, red, fill=gray!20](0.5 ,-1) arc (0:-90:0.5cm and  1cm);
\draw  [dashed, red](0 ,2) arc (90:270:0.5cm and  2cm);
%right lines
\draw [thick, red] (0.5, 1)--(4, 1);
\draw [thick, red] (0.5, -1)--(4, -1);
%right vertical
\draw (4,-1)--(4,1);

\node at (-5.5, 0.5) {$\gamma$};
\node at (4.5, 0.5) {$x$};
\node at (2, 1.5) {${\color{red} \widehat L}$};
\node at (-3.7, -2.7) {$\p$};
\node at (1, -2.7) {$\mathcal{CO}$};
	\end{scope}

%right drawing
%gray fill-in
\draw [gray!20, fill=gray!20] (-3.5,2)--(0,2)--(0.5,1)--(2,1)--(2,-1)--(0.5,-1)--(0,-2)--(-3.5,-2)--(-3.5,2);
\draw [gray!20, fill=gray!20] (-3.5,2) arc (90:270:0.5cm and 2cm);
\draw [gray!20, fill=gray!20] (3, 1)--(4.5, 1)--(4.5, -1)--(3, -1)--(3, 1);

%left ellipse
\draw (-3.5, 2) arc (90:270:0.5cm and  2cm);

%\draw  [gray!20,fill=gray!20](-4, 2) arc (90:270:0.5cm and  2cm);

\draw (-3.5, -2) arc (-90:90:0.5cm and  2cm);

%left lines
\draw [thick] (-3.5, 2)--(0, 2);
\draw [thick] (-3.5, -2)--(0, -2);

%right ellipses
\draw  [thick, red, fill=gray!20](0.5, 1) arc (0:90:0.5cm and  1cm);
\draw  [thick, red, fill=gray!20](0.5 ,-1) arc (0:-90:0.5cm and  1cm);
\draw  [dashed, red](0 ,2) arc (90:270:0.5cm and  2cm);

%right lines
\draw [thick, red] (0.5, 1)--(2, 1);
\draw [thick, red] (0.5, -1)--(2, -1);
%right vertical

\draw (2,-1)--(2,1);

%right lines/vertical 2
\draw [thick, red] (3, 1)--(4.5, 1);
\draw [thick, red] (3, -1)--(4.5, -1);
\draw (3,-1)--(3,1);
\draw (4.5,-1)--(4.5,1);

\node at (-4.5, 0.5) {$\gamma$};
\node at (5, 0.5) {$x$};
\node at (1.3, 1.5) {${\color{red} \widehat L}$};
\node at (3.7, 1.5) {${\color{red} \widehat L}$};
\node at (-1, -2.7) {$\mathcal{CO}$};
\node at (3.7, -2.7) {$\p$};

	\end{tikzpicture}
		\caption{Broken Floer chimneys showing $\p \circ \CO  = \CO \circ \p$}
	\label{fig: brochim}
\end{figure}\noindent
Taking homotopies of admissible Hamiltonians $H_{\mathcal{T}}$, a standard argument in Floer theory in \cite[Section 4.4]{CiOa18} allows us to pass it to the direct limit via continuation~maps
$$
\CO: \SH_k(W) \rightarrow \HW_{k-n}(L).
$$
We call this map the \emph{closed-open map} from symplectic homology to wrapped Floer homology.
\begin{remark}\label{rem: ringCO}
In \cite[Theorem 8.2]{RitterIvan}, it is shown that $\CO: \SH_*(W) \rightarrow \HW_*(L)$ is a unital ring homomorphism with respect to the standard ring structures described e.g. in \cite{Rit}.
\end{remark}

\subsubsection{Filtered closed-open maps} \label{sec: filteredclosedopenmaps} Closed-open maps respect the action filtrations. To see this, one introduces the \emph{topological energy} of Floer chimneys as follows:
$$
E(u) : = \int_{\mathcal{T}} u^*d\lda -u^*dH_{\mathcal{T}} \wedge \beta - u^*H_{\mathcal{T}} d\beta
$$
where $u \in \mathcal{M}(\gamma, x, H_{\mathcal{T}}, J_{\mathcal{T}})$ as in Section \ref{sec: Floerchimneys}. It is observed in \cite[Appendix B]{Abouzaid} that $E(u) \geq 0$, and a direct computation shows that 
$$
E(u) = \mathcal{A}_{H_{S^1}}(\gamma) - \mathcal{A}_H(x).
$$
In particular, Floer chimneys decrease action values. For each $a \in \R$,  we have \emph{filtered closed-open maps}
$$
\CO^a : \SH_k^a (W) \rightarrow \HW^a_{k-n}(L).
$$
The filtered closed-open maps are compatible with the tautological exact sequences \eqref{eq: leqSH} and \eqref{eq: taulesHW} in the following sense. This is also observed in \cite[Section 5.2.1]{BormanMclean} and essentially follows from \cite[Theorem 1.5]{Al08}.%Below $\epsilon >0$ is a small number in the sense of \eqref{eq: SHepsilonsmall} and \eqref{eq: HWepsilonsmall}.

\begin{theorem}\label{thm: closedopenmap}
The closed-open map
$$
\mathcal{CO}^a: \SH^a_k(W) \rightarrow \HW_{k-n}^a(L)
$$
for $a > 0$ fits into the following commutative diagram:

\begin{equation}\label{eq: diagclosedopen}
\begin{tikzcd}
\Ho_{k+n}(W, \p W) \arrow[r, "j^a"] \arrow[d, "i_!"] & \SH_k^a(W) \arrow[d, "\CO^a"] \\ 
\Ho_k(L, \p L)  \arrow[r, "j^a"] &  \HW_{k-n}^a(L)
\end{tikzcd}
\end{equation}
where the left vertical $i_!: \Ho_{k+n}(W, \p W) \rightarrow \Ho_k(L, \p L)$ is the natural map induced by the inclusion $i: L \rightarrow W$.
%In particular the map $\CO^{\epsilon}$ sends the fundamental class $[W, \p W] \in \Ho_{2n}(W, \p W)$ to the fundamental class $[L, \p L] \in H_n(L, \p L)$. 
\end{theorem}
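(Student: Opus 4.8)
The plan is to split the square \eqref{eq: diagclosedopen} into the composition of two squares along the canonical identifications \eqref{eq: SHepsilonsmall} and \eqref{eq: HWepsilonsmall}: a purely formal naturality square relating the $\epsilon$-window to the $a$-window, and one genuinely geometric square in which the closed-open map is identified with the topological Umkehr map $i_!$ in the $C^2$-small regime. First I would fix a slope $\tau>0$ and admissible data $(H_{S^1},J_{S^1})$, $(H,J)$, $(H_{\mathcal{T}},J_{\mathcal{T}})$ of slope $\tau$ chosen compatibly: in the interior region $W\subset\widehat W$ the Hamiltonians agree with a fixed Morse function $f$ on $W$ whose restriction $f|_L$ is Morse on $L$ (a small time-dependent perturbation, supported away from the critical points of $f$, is added on the $S^1$- and chimney-sides to achieve non-degeneracy of the non-constant orbits/chords). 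With these choices the $1$-periodic orbits and $1$-chords of action less than $\epsilon$ are exactly the critical points of $f$ in $W$ and of $f|_L$ in $L$, the complex $\CF^\epsilon_*(H_{S^1},J_{S^1})$ is the Morse complex of $(f,W)$ computing $\Ho_{*+n}(W,\p W)$ (the shift by $n$ from the $\CZ$-normalisation $|\gamma|=-\CZ(\gamma)$, the pair $(W,\p W)$ from the outward Liouville vector field along $\p W$), and likewise $\CF^\epsilon_{*-n}(H,J)$ is the Morse complex of $(f|_L,L)$ computing $\Ho_*(L,\p L)$; these are precisely the identifications \eqref{eq: SHepsilonsmall} and \eqref{eq: HWepsilonsmall}.

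The core step is to show that, under these identifications, the filtered closed-open map $\mathcal{CO}^\epsilon\colon \SH^\epsilon_k(W)\to\HW^\epsilon_{k-n}(L)$ is exactly $i_!$. This is a PSS-type computation. A Floer chimney of action $<\epsilon$ has constant asymptotics, at a critical point $p\in\crit(f)\subset W$ at the interior puncture and at a critical point $q\in\crit(f|_L)\subset L$ at the boundary puncture; carrying out an adiabatic degeneration of the chimney domain $\mathcal{T}$ (scaling the $1$-form $\beta$ and the Hamiltonian term to zero, as in the PSS construction), a rigid chimney breaks into a half-infinite negative gradient trajectory of $f$ issuing from $p$, a marked point on $L$, and a half-infinite negative gradient trajectory of $f|_L$ arriving at $q$ — precisely the configurations defining the Morse-theoretic Umkehr map, which Lefschetz-dually is the restriction $i^*\colon\Ho^{n-k}(W)\to\Ho^{n-k}(L)$, i.e.\ $i_!$. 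Compactness for this degeneration is guaranteed by the fact that Floer chimneys stay in a compact region of $\widehat W$ via \cite[Lemma 7.2]{AboSei}, so no bubbling or escape to the end interferes. (Alternatively one could invoke that $\mathcal{CO}$ is a unital ring homomorphism, Remark \ref{rem: ringCO}, reducing the degree-$n$ statement to $\mathcal{CO}(\text{unit})=\text{unit}$ and then spreading it to all degrees via the $\Ho_*(W,\p W)$-module structure on $\Ho_*(L,\p L)$; but the adiabatic argument yields the statement directly in every degree.) I expect this Morse-theoretic identification of $\mathcal{CO}^\epsilon$ to be the main obstacle; it is exactly the content attributed to \cite[Theorem 1.5]{Al08} and \cite[Section 5.2.1]{BormanMclean}.

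Finally I would assemble the diagram. Since $\mathcal{CO}$ strictly decreases the action,
$$
E(u)=\mathcal{A}_{H_{S^1}}(\gamma)-\mathcal{A}_H(x)\ge 0,
$$
for every $\epsilon<a$ the chain map $\mathcal{CO}$ restricts to a morphism of the short exact sequences of filtered complexes defining the tautological sequences in Sections \ref{sec: tauexseqSH} and \ref{sec: tauexseqHW}; passing to homology gives a commuting square relating $\mathcal{CO}^\epsilon$, $\mathcal{CO}^a$ and the inclusion-induced maps $\SH^\epsilon_k(W)\to\SH^a_k(W)$ and $\HW^\epsilon_{k-n}(L)\to\HW^a_{k-n}(L)$. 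A standard continuation-map argument as in \cite[Section 4.4]{CiOa18} shows these squares are compatible with increasing the slope $\tau$, hence survive the direct limit over $\tau$. Composing this naturality square with the identification $\mathcal{CO}^\epsilon=i_!$ from the previous paragraph, and recalling that by definition $j^a$ on each side is the composite of \eqref{eq: SHepsilonsmall} (resp.\ \eqref{eq: HWepsilonsmall}) with the inclusion of the $\epsilon$-window into the $a$-window, produces exactly the commutative diagram \eqref{eq: diagclosedopen}. Everything except the adiabatic identification is formal naturality of filtered Floer theory.
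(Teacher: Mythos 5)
Your proposal is correct and follows essentially the same route as the paper: both decompose \eqref{eq: diagclosedopen} into the geometric square at the $\epsilon$-level (identifying $\CO^{\epsilon}$ with $i_!$ under the PSS-type identifications \eqref{eq: SHepsilonsmall} and \eqref{eq: HWepsilonsmall}) composed with the formal square coming from compatibility of $\CO$ with the action filtration, using $j^a = \iota_a\circ\psi$. The only difference is that the paper simply cites \cite[Theorem 1.5]{Al08} for the $\epsilon$-level square, whereas you sketch the underlying adiabatic/PSS degeneration argument directly.
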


\begin{proof}
Let  $\epsilon >0$ be a sufficiently small number such that \eqref{eq: SHepsilonsmall} and \eqref{eq: HWepsilonsmall} hold. The commutative diagram (on the right) in \cite[Theorem 1.5]{Al08} together with the compatibility of closed-open maps with the action filtrations yields the following commutative diagram.
\begin{equation}\label{eq: commcombinedocmaps}
\begin{tikzcd}
\Ho_{k+n}(W, \p W) \arrow[r, "\psi"] \arrow[d, "i_!"] & \SH^{\epsilon}_k(W) \arrow[r, "\iota_a"] \arrow[d, "\CO^{\epsilon}"]& \SH_k^a(W) \arrow[d, "\CO^a"] \\ 
\Ho_k(L, \p L)  \arrow[r, "\psi"] & \HW_{k-n}^{\epsilon}(L) \arrow[r, "\iota_a"]&  \HW_{k-n}^a(L)
\end{tikzcd}
\end{equation}
Here, the maps $\psi$, constructed in \cite[Section 15]{Rit}, are the analogues of the (relative) Piunikhin--Salamon--Schwarz isomorphism \cite{PSS}, and the maps $\iota_a$ are the natural inclusions from the respective tautological exact sequences \eqref{eq: leqSH} and \eqref{eq: taulesHW}. Indeed, the exactness of $L$ replaces the monotonicity assumption in \cite[Theorem 1.5]{Al08} from which we obtain the commutativity of the left square in \eqref{eq: commcombinedocmaps} modulo conventional differences between Floer homology and cohomology. The commutativity of the right square in \eqref{eq: commcombinedocmaps} is an immediate consequence of the fact that the closed-open map $\CO$ respects the action filtration on $\SH_*(W)$ and $\HW_*(L)$. As in \cite[Lemma 15.1]{Rit}, we know that $j^a = \iota_a \circ \psi$, and the commutative diagram \eqref{eq: diagclosedopen} in the assertion therefore follows from \eqref{eq: commcombinedocmaps}.
%{\color{blue}The commutative diagram is an immediate consequence of the fact that the closed-open map $\CO$ respects the action filtrations on $\SH_*(W)$ and $\HW_*(L)$.  The second assertion on $\CO^{\epsilon}$ follows from the fact that $\CO: \SH_*(W) \rightarrow \HW_*(L)$ is a unital ring homomorphism, see Remark \ref{rem: ringCO}, and the fact that the map $j = j^{\infty}: \Ho_{2n}(W, \p W) \rightarrow \SH_n(W)$ in the tautological exact sequence sends the fundamental class to the unit \cite{Rit}. Indeed, by the commutativity of the diagram with $a = \infty$, we have
%$$
%(j  \circ \CO^{\epsilon}) [W, \p W] = (\CO \circ j)[W, \p W] = \CO (1_W) = 1_L
%$$
%where $1_W \in \SH_n(W)$ and $1_L \in \HW_0(L)$ denote the respective unit elements. Since the unit $1_L$ is a non-zero element of $\HW_0(L)$, its preimage $\CO^{\epsilon}[W, \p W]$ under $j : \Ho_n(L, \p L) \rightarrow \HW_{0}(L)$ should also be a non-zero element of $\Ho_n(L, \p L)$. Recall that $L$ is assumed to be connected as in Section \ref{sec: wrappedchaincomplex}. It follows that $\Ho_n(L, \p L) \cong \Z_2$ and the unique non-zero element of $\Ho_n(W, \p W)$ is the fundamental class $[L, \p L]$. This concludes $\CO^{\epsilon}[W, \p W] = [L, \p L]$.}
\end{proof}

\subsubsection{Without absolute grading}\label{sec: nograding} Even though we have worked with the absolute $\Z$-grading on $\SH_*(W)$ and $\HW_*(L)$ for the sake of completeness, the Floer homologies and the filtered closed-open maps with Theorem \ref{thm: closedopenmap} readily work regardless of the grading. In fact the discussions in Section \ref{sec: Floerhomologycapacities} do not require the Floer homologies and the closed-open maps to be graded, and the topological assumptions $c_1(TW)=0$ and $\pi_1(L) = 0$ are therefore not necessary for our applications; see Example \ref{ex: admLagconditions}.

In this case, as a fairly standard way in Floer theory, we take the zero-dimensional component of the moduli spaces \eqref{eq: SHmoduli}, \eqref{eq: HWmoduli}, \eqref{eq: COmoduli} of Floer solutions instead of fixing the difference of the absolute gradings of asymptotes; the Fredholm index of underlying Fredholm problems determines the local dimension of the corresponding moduli spaces. Then we define the differentials \eqref{eq: SHdifferential}, \eqref{eq: HWdiff} and the chain map \eqref{eq: COchainmap} on ungraded Floer chain groups by counting elements of the zero-dimensional component of the respective moduli spaces. The analysis on Floer solutions and the action filtrations on Floer chain complexes are independent of the absolute grading. We therefore obtain the ungraded filtered closed-open map $\CO^{a}: \SH_*^a(W) \rightarrow \HW_*^a(L)$ with the commutative diagram:
$$
\begin{tikzcd}
\Ho_*(W, \p W) \arrow[r, "j^a"] \arrow[d, "i_!"] & \SH_*^a(W) \arrow[d, "\CO^a"] \\ 
\Ho_*(L, \p L)  \arrow[r, "j^a"] &  \HW_*^a(L)
\end{tikzcd}
$$

%\begin{remark}
%{\color{blue} Alternatively one can prove the second assertion in Theorem \ref{thm: closedopenmap} by identifying the map $\CO^{\epsilon}: \Ho_*(W, \p W) \rightarrow \Ho_*(L, \p L)$ with the natural map $\Ho^*(W) \rightarrow \Ho^*(L)$ induced by the inclusion $L \hookrightarrow M$.}
%\end{remark}

%To see that $\CO^{\epsilon}: H_{2n}(W, \p W) \rightarrow H_{n}(L, \p L)$ sends the fundamental class $[W, \p W]$ to the fundamental class $[L, \p L]$, consider part of the commutative diagram in Proposition \ref{prop: commdiagclosedopentau}:
%$$
%\begin{tikzcd}
%H_{2n}(W, \p W) \arrow[r, "c_*"] \arrow[d, "\CO^{\epsilon}"] & \SH_n(W) \arrow[d, "\CO"] \\ 
%H_n(L, \p L)  \arrow[r, "c_*"] &  \HW_{0}(L)
%\end{tikzcd}
%$$
%By [Smith--Ritter], we know $\CO(1_W) = 1_L$. As in [Ritter], the $c_*$ sends the fundamental class to the unit. By the commutativity it follows that $\CO^{\epsilon}([W, \p W]) = [L, \p L]$.

\section{Floer homology capacities} \label{sec: Floerhomologycapacities}

\subsection{SH capacity}\label{sec:SHcap} Let $(W, \lda)$ be a Liouville domain as in Section \ref{sec: SH}. We define the \emph{symplectic homology capacity} or shortly the \emph{$\SH$ capacity}  $c_{\SH}(W, \lda)$ of the domain $(W, \lda)$ by
$$
c_{\SH}(W) = c_{\SH}(W, \lda) = \inf\{a >0 \;|\; j^a [W, \p W] = 0\} \in [0, \infty]
$$
where the map $j^a : \Ho_*(W, \p W )\rightarrow \SH_*^a(W)$ is constructed in Section \ref{sec: tauexseqSH}. If $j^a [W, \p W] \neq 0$ for all $a >0 $, then we conventionally put $c_{\SH}(W) = \infty$. 

\begin{proposition}\label{prop: SHcapacityproperty}\
The $\SH$ capacity satisfies the following properties.
\begin{enumerate}
\item (Conformality) For a positive real number $r$, we have $$c_{\SH}(W, r \lda) = r c_{\SH}(W, \lda).$$
\item (Monotonicity) For a generalized Liouville embedding $(W_1, \lda_1) \hookrightarrow (W_2, \lda_2)$, we have
$$
c_{\SH}(W_1, \lda_1) \leq c_{\SH}(W_2, \lda_2).
$$
\item (Spectrality) If $c_{\SH}(W) < \infty$, there exists a periodic Reeb orbit $\gamma$ on the contact boundary $(\Sigma, \alpha)$ such that $$c_{\SH}(W) = \ell(\gamma)$$ where $\ell(\gamma)$ denotes the period of $\gamma$.
\end{enumerate}
\end{proposition}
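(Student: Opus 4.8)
The plan is to establish the three properties separately, each reducing to a standard structural feature of filtered symplectic homology and of the spectral invariant $j^a[W,\partial W]$. For \textbf{conformality}, I would note that replacing $\lambda$ by $r\lambda$ multiplies $d\lambda$, the induced contact form $\alpha=\lambda|_{\partial W}$, and hence every closed-characteristic action by $r$, while dividing the Reeb field by $r$. Floer-theoretically this is implemented by a canonical rescaling isomorphism $\SH_*^a(W,r\lambda)\cong \SH_*^{a/r}(W,\lambda)$: it comes from the diffeomorphism of completions rescaling the Liouville coordinate, which sends an admissible pair $(H_{S^1},J_{S^1})$ for $(W,\lambda)$ of slope $\tau$ to one for $(W,r\lambda)$ of slope $\tau$ and scales actions by $r$. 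This isomorphism intertwines $j^a$ with $j^{a/r}$ and the identity on $\Ho_*(W,\partial W)$, so $j^a[W,\partial W]=0$ for $(W,r\lambda)$ precisely when $j^{a/r}[W,\partial W]=0$ for $(W,\lambda)$; taking infima yields $c_{\SH}(W,r\lambda)=r\,c_{\SH}(W,\lambda)$.

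For \textbf{monotonicity} I would invoke the Viterbo transfer (restriction) map. A generalized Liouville embedding --- a codimension-zero symplectic embedding $\varphi\colon(W_1,d\lambda_1)\hookrightarrow(W_2,d\lambda_2)$ with $\varphi^*\lambda_2-\lambda_1$ exact --- induces homomorphisms $\varphi^!\colon \SH_*^a(W_2)\to \SH_*^a(W_1)$ for every $a$, compatible with continuation maps, with the action filtration, and with the tautological exact sequences of Section~\ref{sec: tauexseqSH}; exactness of $\varphi^*\lambda_2-\lambda_1$ is what makes $\varphi^!$ preserve the action filtration. This produces a commutative square relating $j^a$ for $W_2$ and for $W_1$ through $\varphi^!$ and the natural map $\Ho_*(W_2,\partial W_2)\to \Ho_*(W_1,\partial W_1)$ carrying $[W_2,\partial W_2]$ to $[W_1,\partial W_1]$. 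Hence $j^a[W_2,\partial W_2]=0$ forces $j^a[W_1,\partial W_1]=\varphi^!\big(j^a[W_2,\partial W_2]\big)=0$, so $\{a>0:j^a[W_1,\partial W_1]=0\}\supseteq\{a>0:j^a[W_2,\partial W_2]=0\}$ and therefore $c_{\SH}(W_1,\lambda_1)\le c_{\SH}(W_2,\lambda_2)$.

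For \textbf{spectrality}, assume $c:=c_{\SH}(W)<\infty$; note $c>0$ since, for $\epsilon>0$ small, $j^\epsilon[W,\partial W]$ is nonzero by \eqref{eq: SHepsilonsmall}. Let $\Spec(\partial W,\alpha)\subset(0,\infty)$ be the set of periods of closed Reeb orbits on $(\partial W,\alpha)$. By Arzel\`a--Ascoli this set is closed: a sequence of closed Reeb orbits with periods in a bounded set subconverges in $C^\infty$ to a closed Reeb orbit. Away from $\Spec(\partial W,\alpha)$ the spectral invariant is locally constant: for $a<b$ in the same component of $(0,\infty)\setminus\Spec(\partial W,\alpha)$ one has $\SH_*^{[a,b)}(W)=0$, because in the direct limit over slopes the actions $\mathcal{A}_{H_{S^1}}(\gamma)$ of $1$-periodic orbits of admissible Hamiltonians cluster only near $\Spec(\partial W,\alpha)$ --- the action of an orbit living near the cylindrical end differs from the period of the underlying Reeb orbit by an amount controlled by the $C^0$-size of $H_{S^1}$ in the interior, which is arbitrarily small along a cofinal sequence. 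Consequently $a\mapsto j^a[W,\partial W]$ is constant on each such component. If $c\notin\Spec(\partial W,\alpha)$ it would lie in an open component $(c-\delta,c+\delta)$ of the complement on which $j^a[W,\partial W]$ is constant; but by definition of $c$ this class is nonzero for $a<c$ and zero for $a>c$, a contradiction. Hence $c\in\Spec(\partial W,\alpha)$, i.e.\ $c_{\SH}(W)=\ell(\gamma)$ for some closed Reeb orbit $\gamma$.

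I expect conformality and monotonicity to be bookkeeping once the rescaling isomorphism and the transfer map are in place. \textbf{The main obstacle is spectrality}: making rigorous that the filtered homology jumps only across $\Spec(\partial W,\alpha)$ requires a careful direct-limit argument controlling how actions of Hamiltonian $1$-orbits approximate Reeb periods as the slope grows, together with the compactness input that $\Spec(\partial W,\alpha)$ is closed; extra care is needed in the possibly degenerate case, where one argues with a cofinal sequence of admissible Hamiltonians and passes to the limit.
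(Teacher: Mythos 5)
Your proposal is correct and follows essentially the same route as the paper, which simply cites the standard references: conformality and the convex case from Irie, monotonicity via the transfer maps of Gutt--Hutchings for generalized Liouville embeddings, and spectrality via the locally-constant-away-from-the-spectrum argument of \cite[Lemma 4.2]{GH} using the relation between Hamiltonian actions and Reeb periods in the direct limit. You have merely written out in detail the arguments the paper delegates to those sources.
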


\begin{remark}\label{rmk:genliuemd}
A symplectic embedding $\varphi : (W_1, \lda_1) \hookrightarrow (W_2, \lda_2)$ is called a \emph{generalized Liouville embedding} if $(\varphi^*\lda_2 - \lda_1)|_{\p W_1} = 0$ in $\Ho^1(\p W_1)$. In particular, if $W_1$ and $W_2$ are both starshaped domains in $\R^{2n}$, every symplectic embedding is a generalized Liouville embedding since $\Ho^1(S^{2n-1}) = 0$ for $n \geq 2$. 
\end{remark}

\begin{remark}
The $\SH$ capacity $c_{\SH}(W)$ is finite if and only if $\SH_*(W) = 0$.
\end{remark}

\begin{proof}[Proof of Proposition \ref{prop: SHcapacityproperty}]
For smooth convex domains in $\R^{2n}$, the above properties are presented e.g.\ in \cite[Section 2.4]{Irie}. For general Liouville domains, the monotonicity comes from the existence of a natural homomorphism $\SH_*^a(W_2) \rightarrow \SH_*^a(W_1)$, called a transfer map, in symplectic homology for generalized Liouville embeddings as in \cite[Theorem 1.24]{GH}. The spectrality follows from essentially the same argument as in \cite[Lemma 4.2]{GH}, using the relationship between action values of Hamiltonian 1-orbits of admissible Hamiltonians and Reeb orbits on the contact boundary; see \cite[Remark 5.6]{GH}.
\end{proof}

\subsection{HW capacity}\label{sec:HWcap} We can define an open string analogue of the SH capacity using wrapped Floer homology. Let $L$ be an admissible Lagrangian in a Liouville domain $(W, \lda)$. Recall that $L$ is assumed to be connected; Section \ref{sec: wrappedchaincomplex}. The \emph{wrapped Floer homology capacity} or shortly \emph{$\HW$ capacity}  is defined as
$$
c_{\HW}(W) = c_{\HW}(W, \lda, L) = \inf\{a >0 \;|\; j^a [L, \p L] = 0\}  \in [0, \infty]
$$
where the map $j^a : \Ho_*(L, \p L)\rightarrow \HW^a_*(L)$ is defined in Section \ref{sec: tauexseqHW}. We set $c_{\HW}(W) = \infty$ if $j^a [L, \p L] \neq 0$ for all $a > 0$. The following is completely analogous to that for the SH capacity  in Proposition \ref{prop: SHcapacityproperty}; we omit its proof.

\begin{proposition}\label{prop: propertiesofHWcapa}
The $\HW$ capacity satisfies the following properties.
\begin{enumerate}
\item (Conformality) For a positive real number $r$, we have $$c_{\HW}(W, r \lda, L) = r c_{\HW}(W, \lda, L).$$
\item (Monotonicity) For a generalized Liouville embedding $\varphi : (W_1, \lda_1) \rightarrow (W_2, \lda_2)$ with $\varphi(L_1) \subset L_2$, we have
$$
c_{\HW}(W_1) \leq c_{\HW}(W_2).
$$
\item (Spectrality) If $c_{\HW}(W) < \infty$, there exists a Reeb chord $x$ on the contact boundary $(\Sigma, \alpha, \mathcal{L})$ such that $$c_{\HW}(W) =\ell(x)$$ where $\ell(x)$ denotes the length of $x$.
\end{enumerate}
\end{proposition}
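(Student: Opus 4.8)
The plan is to follow the proof of Proposition \ref{prop: SHcapacityproperty} essentially verbatim, substituting wrapped Floer homology for symplectic homology, Reeb chords for periodic Reeb orbits, and the relative fundamental class $[L,\p L]$ for $[W,\p W]$; all three properties will be formal consequences of structural features of filtered wrapped Floer homology that mirror those of filtered symplectic homology recalled in Section \ref{sec: tauexseqHW}. For \emph{conformality}, I would note that rescaling $\lda \rightsquigarrow r\lda$ ($r>0$) rescales the contact form $\alpha=\lda|_{\p W}$, hence multiplies the lengths of all Reeb chords of $(\Sigma,\alpha,\mathcal{L})$ by $r$, and that after the substitution $H \rightsquigarrow r\,H(\cdot/r)$ on admissible Hamiltonians the Floer data, the $1$-chords, and the Floer strips are identified while the action functional $\mathcal{A}_H$ is multiplied by $r$. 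This gives a canonical filtered isomorphism $\HW^a_*(W,r\lda,L) \cong \HW^{a/r}_*(W,\lda,L)$ intertwining the maps $j^a$, so that $j^a[L,\p L]=0$ for $(W,r\lda,L)$ iff $j^{a/r}[L,\p L]=0$ for $(W,\lda,L)$; taking the infimum yields $c_{\HW}(W,r\lda,L)=r\,c_{\HW}(W,\lda,L)$. This step is routine.

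For \emph{monotonicity}, the main input is a Lagrangian analogue of Viterbo's transfer map: for a generalized Liouville embedding $\varphi\colon(W_1,\lda_1)\hookrightarrow(W_2,\lda_2)$ with $\varphi(L_1)\subset L_2$, one constructs a filtration-preserving transfer homomorphism $\HW^a_*(L_2)\to\HW^a_*(L_1)$, the open-string counterpart of the transfer map of \cite[Theorem 1.24]{GH} (such restriction maps for wrapped Floer homology also appear e.g.\ in \cite{BormanMclean}). I would then check that this transfer map fits into a commutative square with the tautological maps $j^a$ and the Umkehr map $i_!\colon \Ho_*(L_2,\p L_2)\to\Ho_*(L_1,\p L_1)$ induced by the inclusion, exactly the compatibility appearing in Theorem \ref{thm: closedopenmap}. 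Since $i_!$ carries $[L_2,\p L_2]$ to $[L_1,\p L_1]$, vanishing of $j^a[L_2,\p L_2]$ forces vanishing of $j^a[L_1,\p L_1]$; hence $\{a>0 : j^a[L_2,\p L_2]=0\}\subseteq\{a>0 : j^a[L_1,\p L_1]=0\}$, and taking infima gives $c_{\HW}(W_1)\le c_{\HW}(W_2)$.

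For \emph{spectrality}, set $c:=c_{\HW}(W)<\infty$ and let $\Spec(\Sigma,\alpha,\mathcal{L})$ be the set of lengths of Reeb chords of $(\Sigma,\alpha,\mathcal{L})$, which is closed (discrete when $\alpha$ is non-degenerate). The plan is to show that $a\mapsto\HW^a_*(L)$, together with the maps $j^a$, is locally constant on $\R_{>0}\setminus\Spec(\Sigma,\alpha,\mathcal{L})$: near a non-spectral value $a$, choose an admissible Hamiltonian of slope $>a$ that is sufficiently $C^2$-small in the interior so that the actions of its $1$-chords lie within an arbitrarily small neighbourhood of $\Spec(\Sigma,\alpha,\mathcal{L})\cup\{0\}$ (using the relation between actions of Hamiltonian $1$-chords and Reeb chord lengths, cf.\ \cite[Remark 5.6]{GH}); then $\CF^{a'}_*(H,J)$ is constant for $a'$ in a neighbourhood of $a$, and after passing to the direct limit — the continuation maps stabilise once the slope exceeds $a$ — so are $\HW^{a'}_*(L)$ and $j^{a'}$. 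If $c\notin\Spec(\Sigma,\alpha,\mathcal{L})$, local constancy would make $j^a[L,\p L]$ simultaneously nonzero (for $a<c$ near $c$) and zero (for $a>c$ near $c$), contradicting that $c$ is the infimum of the vanishing set; hence $c=\ell(x)$ for some Reeb chord $x$. I expect this last step to be the main obstacle: making the local constancy in $a$ precise — controlling the $1$-chord spectrum of admissible Hamiltonians uniformly near the Reeb chord spectrum, and commuting these estimates with the direct limit over slopes — is the only place requiring genuine Floer-theoretic care, whereas conformality is essentially formal and monotonicity reduces to the by-now-standard construction of wrapped transfer maps compatible with units.
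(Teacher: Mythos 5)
Your proposal is correct and follows essentially the same route as the paper, which omits the proof as "completely analogous" to Proposition \ref{prop: SHcapacityproperty}: conformality by rescaling the Floer data, monotonicity via the (wrapped) transfer map compatible with $j^a$, and spectrality via the Gutt--Hutchings-style local constancy of the filtered homology away from the chord spectrum, citing the same sources (\cite{GH}, \cite{BormanMclean}). Your write-up simply supplies the details the paper chose to omit.
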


\begin{remark}
The SH capacity is also known as the \emph{Floer--Hofer--Wysocki capacity} defined in \cite{FHW}, and the HW capacity is referred to as \emph{Lagrangian Floer--Hofer--Wysocki capacity} in \cite{BormanMclean}.
\end{remark}

\begin{remark}\label{rem: SH=0=HW}
The $\HW$ capacity $c_{\HW}(W, \lda, L)$ is finite if and only if $\HW_*(L) =0$, which is in particular the case when $\SH_*(W) = 0$, see \cite[Theorem 10.6]{Rit}.
\end{remark}

\subsection{Proof of Theorem \ref{mainthm}}\label{sec:Upper} In this section, we give a proof of the estimate \eqref{main:estimate}. %following theorem using relations between $c_{\SH}$ and $c_{\HW}$.

%\begin{theorem}\label{thm: upperbound}
%Let $L$ be an admissible real Lagrangian in a convex body $K$ in $\R^{2n}$. The systole $\ell_{\min}$ and the symmetric systole $\ell_{\min}^{\rm sym}$ of the contact type boundary $(\p K, \alpha, \mathcal{L})$ satisfies
%$$
%\frac{\ell_{\min}^{\rm sym}(\p K, \alpha, \mathcal{L)}}{\ell_{\min} (\p K, \alpha)} \leq 2.
%$$
%\end{theorem}
%\begin{remark}
%The assumption in Theorem \ref{mainthm} for anti-symplectic involutions to be exact is not essential. For a given (possibly non-exact) anti-symplectic involution $\rho$ on $K$, we can find a Liouville form $\lda$ such that $d\lda = d\lda_0$ and $\rho$ is exact with respect to $\lda$. For example one takes $\lda := \frac{1}{2}(\lda_0 - \rho^*\lda_0)$. Note that the symmetric ratio can be rephrased in terms of the action of closed characteristics on the boundary $\p K$, and this is independent of the choice of Liouville forms. We can work with $\lda$ instead of $\lda_0$ in the proof of the estimate.
%\end{remark}

Let $K$ be a smooth compact convex domain in $\R^{2n}$ which is invariant under an anti-symplectic involution $\rho$ of $(\R^{2n},d\lambda_0)$, and the real Lagrangian $\Fix(\rho)$ intersects the boundary $\p K$. To apply our Floer setup, we choose a Liouville form $\lambda$ on $K$ with $d\lda = d\lda_0$ such that $\rho$ is an \emph{exact} anti-symplectic involution with respect to $\lda$ and the associated Liouville vector field is positively transverse along the boundary. For example one takes the average $\lambda:=\frac{1}{2}(\lambda_0-\rho^*\lambda_0)$:

\begin{lemma}\label{lem: ldaisalsofine}Let $(W,\lambda_0)$ be a Liouville domain with an anti-symplectic involution $\rho: W \rightarrow W$. Then $\lambda :=\frac{1}{2}(\lambda_0 - \rho^*\lambda_0)$ satisfies that $d\lda = d\lda_0$, $\rho^*\lambda=-\lambda$, and the corresponding Liouville vector field $X_\lambda$ of $\lambda$ is positively transverse along the boundary $\p W$.
\end{lemma}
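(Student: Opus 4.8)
The plan is to verify the three claims about $\lambda := \tfrac{1}{2}(\lambda_0 - \rho^*\lambda_0)$ directly.

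\textbf{Step 1: $d\lambda = d\lambda_0$.} Since $\rho$ is anti-symplectic, $\rho^* d\lambda_0 = -d\lambda_0$, so $d(\rho^*\lambda_0) = \rho^*(d\lambda_0) = -d\lambda_0$. Hence
$$
d\lambda = \tfrac{1}{2}\bigl(d\lambda_0 - d(\rho^*\lambda_0)\bigr) = \tfrac{1}{2}\bigl(d\lambda_0 - (-d\lambda_0)\bigr) = d\lambda_0.
$$

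\textbf{Step 2: $\rho^*\lambda = -\lambda$.} Using $\rho^2 = \mathrm{Id}$, so $(\rho^2)^* = \mathrm{Id}$, one computes
$$
\rho^*\lambda = \tfrac{1}{2}\bigl(\rho^*\lambda_0 - \rho^*\rho^*\lambda_0\bigr) = \tfrac{1}{2}\bigl(\rho^*\lambda_0 - \lambda_0\bigr) = -\lambda.
$$

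\textbf{Step 3: $X_\lambda$ is positively transverse along $\partial W$.} Recall $X_\lambda$ is characterized by $\iota_{X_\lambda} d\lambda = \lambda$, and since $d\lambda = d\lambda_0$ we have $\iota_{X_\lambda} d\lambda_0 = \lambda = \tfrac12(\lambda_0 - \rho^*\lambda_0)$, while the original Liouville field satisfies $\iota_{X_{\lambda_0}} d\lambda_0 = \lambda_0$. I would relate the two as follows: $\rho$ maps $W$ to $W$, hence maps $\partial W$ to $\partial W$; let $\nu$ be an outward-pointing vector at $p \in \partial W$, so that $d\lambda_0(X_{\lambda_0}, \cdot)$ paired suitably with a defining function gives positivity. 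The cleanest approach is to observe that the pushforward $\rho_* X_{\lambda_0}$ is the Liouville vector field of $-\rho^*\lambda_0$ (since $\rho$ is anti-symplectic, conjugating the Liouville equation for $\lambda_0$ by $\rho$ produces the Liouville equation for $-\rho^*\lambda_0$ with respect to $d\lambda_0$), and therefore $X_\lambda = \tfrac12\bigl(X_{\lambda_0} + \rho_* X_{\lambda_0}\bigr)$. Now take a $\rho$-invariant defining function $f$ for $\partial W$ (e.g. replace any defining function $g$ by $\tfrac12(g + g\circ\rho)$); then $df(X_{\lambda_0}) > 0$ along $\partial W$ because $X_{\lambda_0}$ points outward, and $df(\rho_* X_{\lambda_0})\big|_p = df\big|_p(\rho_* X_{\lambda_0}) = d(f\circ\rho^{-1})\big|_{\rho(p)}(X_{\lambda_0}) = df\big|_{\rho(p)}(X_{\lambda_0}) > 0$ using $\rho$-invariance of $f$ and that $\rho(p) \in \partial W$. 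Averaging two positive quantities gives $df(X_\lambda) > 0$ along $\partial W$, i.e. $X_\lambda$ is positively transverse.

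The only genuinely non-formal step is Step 3, and within it the one claim worth stating carefully is that $\rho_* X_{\lambda_0}$ is the Liouville vector field of $-\rho^*\lambda_0$: from $\iota_{X_{\lambda_0}} d\lambda_0 = \lambda_0$ one pulls back by $\rho$ to get $\iota_{\rho^*X_{\lambda_0}\text{-dual}} \rho^*d\lambda_0 = \rho^*\lambda_0$, i.e. $\iota_{(\rho^{-1})_*X_{\lambda_0}}(-d\lambda_0) = \rho^*\lambda_0$, so $(\rho^{-1})_*X_{\lambda_0} = (\rho)_* X_{\lambda_0}$ (as $\rho^{-1}=\rho$) is the Liouville field of $-\rho^*\lambda_0$ with respect to $d\lambda_0$. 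I do not expect any real obstacle here; the argument is entirely a bookkeeping exercise with pullbacks, the main care being to use the $\rho$-invariant defining function so that transversality can be checked symmetrically at $p$ and $\rho(p)$.
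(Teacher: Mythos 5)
Your proposal is correct and follows essentially the same route as the paper: the first two claims are the same formal computations, and for transversality both arguments identify $X_\lambda=\tfrac12\bigl(X_{\lambda_0}+\rho_*X_{\lambda_0}\bigr)$ via the pullback of the Liouville equation and then use that $\rho$ preserves $\p W$ and the interior to see that $\rho_*X_{\lambda_0}$ is still outward-pointing, so the convex combination is too. Your use of a $\rho$-invariant defining function is just a slightly different packaging of the paper's remark that the pushforward of an outward normal vector is again outward.
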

\begin{proof}
It is immediate to see that $d\lambda=d\lambda_0$ and $\rho^*\lambda=-\lambda$. We claim that the Liouville vector field $X_\lambda$ of $\lambda$, defined by $d\lambda_0(X_\lambda,\cdot)=\lambda$, is positively transverse along the boundary $\p W$. Observe that $X_{\rho^*\lambda_0}=-\rho^*X_{\lambda_0}$. Indeed, for any vector $Y\in TW$,
\begin{align*}
	d\lambda_0 ( \rho^*X_{\lambda_0}, Y) &= \rho^*d\lambda_0(X_{\lambda_0}\circ \rho, \rho^*(Y\circ \rho)) \\
	&= -d\lambda_0(X_{\lambda_0}\circ \rho, \rho^*(Y\circ \rho)) \\
	&= -\lambda_0(\rho^*(Y\circ \rho)) \\
	&= -\rho^*\lambda_0(Y).
\end{align*}
Since the diffeomorphism $\rho: W\to W$ preserves the boundary $\p W$ and the interior of $W$, respectively, the push-forward of any outward normal vector along the boundary under $\rho$ is again an outward normal vector. From this fact, we deduce that the pull-back $\rho^*X_{\lambda_0}$ is positively transverse along the boundary. Therefore the convex sum
$$
X_\lambda=\frac{1}{2}(X_{\lambda_0}-X_{\rho^*\lambda_0})=\frac{1}{2}(X_{\lambda_0}+\rho^*X_{\lambda_0})
$$
is positively transverse along $\p W$ as well.
\end{proof}

Note that the systoles $\ell_{\min}(\p K)$ and $\ell_{\min}^{\rm sym}(\p K,\rho)$ defined in \eqref{eq: systoleforlda} do not depend on the choice of the Liouville form $\lambda$ as explained in Remark \ref{rem: chacfoli}, and we can work with $\lda$ instead of $\lda_0$. The triple $(K,\lambda,\rho)$ now fits our Floer setup as in Example \ref{ex: admLagconditions}. Note that we do not assume $\pi_1(L) = 0$; see Section \ref{sec: nograding}.

Abbreviate $\alpha = \lambda |_{\p K}$ and denote the restriction of $\rho$ to $\p K$ again by the same letter. First we relate the above Floer homology capacities with (symmetric) systoles. The following is a non-trivial fact relating the systole $\ell_{\min}(\p K)$ with the SH capacity, which is recently proved in \cite{AbonKang} and \cite{Irie}.

\begin{theorem}[Abbondandolo--Kang, Irie]\label{prop: AKIrie}
Let $K$ be a smooth convex body in $\R^{2n}$. Then the $\SH$ capacity of $K$ coincides with the systole of the contact boundary $( \p K, \alpha)$
$$
c_{\SH}(K) = \ell_{\rm min}(\p K ).
$$
\end{theorem}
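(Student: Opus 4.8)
The plan is to prove the two inequalities $c_{\SH}(K)\ge\ell_{\min}(\p K)$ and $c_{\SH}(K)\le\ell_{\min}(\p K)$ separately. The first is soft and follows from the formal properties of the $\SH$ capacity recorded in Proposition~\ref{prop: SHcapacityproperty}. Since $0$ lies in the interior of the compact convex body $K$, one can sandwich $B^{2n}(r)\subset K\subset B^{2n}(R)$ for some $0<r<R$, so the monotonicity property gives $c_{\SH}(K)\le c_{\SH}(B^{2n}(R))<\infty$; equivalently $\SH_*(K)=0$. The spectrality property then applies and produces a closed Reeb orbit $\gamma$ on $(\p K,\alpha)$ with $c_{\SH}(K)=\ell(\gamma)\ge\ell_{\min}(\p K)$.

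For the reverse inequality I would factor through the Ekeland--Hofer--Zehnder capacity $c_{\mathrm{EHZ}}$. Recall the classical fact, going back to Clarke, Ekeland and Hofer--Zehnder, that for a convex body $K$ the dual action functional $\Psi_K$ on the Hilbert space of $L^2$-loops of mean zero is a smooth functional whose nonzero critical points correspond to closed characteristics on $\p K$, with critical values equal to their actions; its smallest positive critical value, hence $c_{\mathrm{EHZ}}(K)$, equals $\ell_{\min}(\p K)$. It therefore suffices to prove $c_{\SH}(K)\le c_{\mathrm{EHZ}}(K)$.

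To this end I would compute the filtered symplectic homology of $K$ directly from $\Psi_K$. Take a cofinal family of admissible Hamiltonians of the form $H=\beta(\|\cdot\|_K)$, where $\|\cdot\|_K$ is the gauge of $K$ and $\beta$ is a convex reparametrization of the given slope. For such convex Hamiltonians the Floer action functional is Legendre-dual to $\Psi_K$, and one sets up a filtration-preserving isomorphism from the filtered Floer homology of $H$ to the filtered Morse--Bott homology of $\Psi_K$. Passing to the direct limit over slopes then identifies $\SH^a_*(K)$ with the homology of the sublevel set $\{\Psi_K<a\}$, and under this identification the class $j^a[K,\p K]$ corresponds to a distinguished class that is nonzero exactly for $a<c_{\mathrm{EHZ}}(K)$. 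This yields $c_{\SH}(K)\le c_{\mathrm{EHZ}}(K)$, and combined with the first paragraph and $c_{\mathrm{EHZ}}(K)=\ell_{\min}(\p K)$ this proves the theorem. (Alternatively, Irie's route bypasses Clarke duality, using structural properties of symplectic homology together with an explicit convex model.)

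The main obstacle is precisely this filtered chain-level identification: one must match the Floer cylinders of the convex Hamiltonian $H$ with the gradient flow lines of $\Psi_K$, control the compactness of the dual variational problem, and verify that the distinguished class coming from $[K,\p K]$ is carried to the generator realizing $c_{\mathrm{EHZ}}(K)$ — all compatibly with the two action filtrations, and after accounting for the degree and sign conventions between the Floer and Clarke pictures. The soft inequality, the classical identity $c_{\mathrm{EHZ}}(K)=\ell_{\min}(\p K)$, and the passage to the direct limit are by comparison routine.
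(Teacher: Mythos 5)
Your statement is one the paper does not prove at all: it is imported wholesale from Abbondandolo--Kang \cite{AbonKang} and Irie \cite{Irie}, and the only fragment the paper argues itself is the easy inequality $c_{\SH}(K)\ge\ell_{\min}(\p K)$ via the spectrality property (Remark \ref{rem: comparionSHsystole}). Your first paragraph reproduces exactly that soft half correctly: finiteness of $c_{\SH}(K)$ from monotonicity against a large ball, then spectrality to realize $c_{\SH}(K)$ as the period of some closed orbit. So on the part the paper actually addresses, you and the paper agree.

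For the substantive inequality $c_{\SH}(K)\le\ell_{\min}(\p K)$, what you have written is an accurate roadmap of the Abbondandolo--Kang proof (Clarke duality, filtered identification of $\SH^a_*(K)$ with sublevel sets of the dual action functional, tracking of the image of $[K,\p K]$), but it is a roadmap and not a proof: the ``main obstacle'' you flag --- the filtration-preserving chain-level isomorphism between the Floer complex of convex Hamiltonians and the Morse complex of $\Psi_K$, together with the identification of the distinguished class --- is not a technical detail to be checked but is essentially the entire content of \cite{AbonKang}. As a self-contained argument your proposal therefore has a genuine gap at precisely that step; as a citation-level justification it is at the same level of rigor as the paper itself, which simply quotes the result. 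If you intend this as a proof you must either carry out the filtered Clarke-duality computation or explicitly invoke \cite{AbonKang} (or \cite{Irie}) for it; the reduction $c_{\mathrm{EHZ}}(K)=\ell_{\min}(\p K)$ for convex bodies and the direct-limit bookkeeping are, as you say, standard.
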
 

\begin{remark}\label{rem: comparionSHsystole}
The inequality $c_{\SH}(K)  \geq  \ell_{\rm min}(\p K )$ is obvious from the spectrality of $c_{\SH}(K)$ in Proposition \ref{prop: SHcapacityproperty}. There is a  starshaped and non-convex $K$ for which    the strict inequality $c_{\SH}(K)  >  \ell_{\rm min}(\p K )$ holds. See for example \cite[Section 3.5]{HZbook}.

\end{remark}
Since $\SH_*(K)=0$, it follows from Remark \ref{rem: SH=0=HW} that $\HW_*(L)=0$ as well. By the spectrality of $c_{\HW}$ in Proposition \ref{prop: propertiesofHWcapa}, there exists a symmetric periodic orbit on $(\p K,\rho)$.
In view of the one-to-one correspondence between symmetric periodic orbits and pairs of Reeb chords on the symmetric convex hypersurface $\p K$, the spectrality of $c_{\HW}$ % in Proposition~\ref{prop: propertiesofHWcapa} 
yields the following comparison.
\begin{proposition}\label{prop: symmsystoleHWcapacity}
The $\HW$ capacity of $(K, \rho)$ and the symmetric systole of $(\p K, \rho)$ satisfy
$$
\ell_{\min}^{\rm sym}(\p K, \rho) \leq 2c_{\HW}(K, \rho).
$$
\end{proposition}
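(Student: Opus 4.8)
The plan is to turn a shortest Reeb chord realizing the $\HW$ capacity into a $\rho$-symmetric periodic orbit of exactly twice its length, by reflecting the chord across $\Fix(\rho)$ and concatenating. Since $\SH_*(K)=0$, Remark \ref{rem: SH=0=HW} gives $\HW_*(L)=0$, so $c_{\HW}(K,\rho)=c_{\HW}(K,\lambda,L)<\infty$; by the spectrality property in Proposition \ref{prop: propertiesofHWcapa} there is a Reeb chord $x\colon[0,T]\to\p K$ of the contact form $\alpha=\lambda|_{\p K}$ with $x(0),x(T)\in L\cap\p K=\Fix(\rho)\cap\p K$ and length $T=\ell(x)=c_{\HW}(K,\rho)$.

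The first step is the elementary observation that $\rho$ reverses the Reeb vector field, $d\rho\circ R=-R\circ\rho$: from $\rho^*\alpha=-\alpha$ (and hence $\rho^*d\alpha=-d\alpha$) the vector field $q\mapsto d\rho^{-1}\bigl(R(\rho(q))\bigr)$ pairs to $-1$ with $\alpha$ and lies in $\ker d\alpha$, so it equals $-R$ by uniqueness of the Reeb vector field. I then set $y(t):=\rho\bigl(x(T-t)\bigr)$ for $t\in[0,T]$. Differentiating and using the previous step gives $\dot y(t)=-d\rho\bigl(R(x(T-t))\bigr)=R\bigl(\rho(x(T-t))\bigr)=R(y(t))$, so $y$ is again a Reeb trajectory, and since $x(0),x(T)\in\Fix(\rho)$ we have $y(0)=\rho(x(T))=x(T)$ and $y(T)=\rho(x(0))=x(0)$. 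Hence the concatenation $\gamma\colon\R/2T\Z\to\p K$ defined by $\gamma(t)=x(t)$ for $t\in[0,T]$ and $\gamma(t)=y(t-T)$ for $t\in[T,2T]$ is a well-defined loop whose one-sided derivatives at $t=T$ and at $t\equiv0$ both equal $R(\gamma)$; thus $\gamma$ is a closed integral curve of $R$, i.e. a smooth periodic orbit of period $2T$. A short bookkeeping with the definition of $y$ shows $\rho(\gamma(t))=\gamma(2T-t)$ for all $t$, so $\rho(\im\gamma)=\im\gamma$ and $\gamma$ is $\rho$-symmetric. Therefore
\[
\ell_{\min}^{\rm sym}(\p K,\rho)\ \le\ 2T\ =\ 2\,\ell(x)\ =\ 2\,c_{\HW}(K,\rho),
\]
which is the claimed estimate.

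This argument is essentially soft, with no hard analysis; the only points needing care are the verification that $\gamma$ is genuinely smooth at the two junction points (automatic for a closed integral curve of a vector field, so that $\gamma$ is a bona fide periodic orbit and not merely a piecewise-smooth loop), the sign conventions in $d\rho\circ R=-R\circ\rho$ and in the symmetry relation $\rho(\gamma(t))=\gamma(2T-t)$, and the remark that the Reeb chord produced by spectrality is a chord on $(\p K,\alpha)$, so that by Remark \ref{rem: chacfoli} the conclusion is independent of the auxiliary choice of Liouville form $\lambda$. (If one prefers, this construction together with the reverse operation—cutting a symmetric periodic orbit at its two intersection points with $\Fix(\rho)$, which are the fixed points of the induced orientation-reversing involution of the parametrizing circle—gives precisely the one-to-one correspondence between symmetric periodic orbits and pairs of Reeb chords mentioned before the statement.)
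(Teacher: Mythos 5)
Your proposal is correct and follows essentially the same route as the paper: the paper obtains finiteness of $c_{\HW}$ from $\SH_*(K)=0$ via Remark \ref{rem: SH=0=HW}, invokes spectrality to get a Reeb chord of length $c_{\HW}(K,\rho)$, and then appeals to the correspondence between symmetric periodic orbits and pairs of Reeb chords to double the chord into a symmetric orbit of period $2c_{\HW}(K,\rho)$. You simply make that doubling construction explicit (the identity $d\rho\circ R=-R\circ\rho$, the reflected trajectory $y$, and smoothness at the junctions), which is exactly the content the paper leaves implicit.
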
 

\begin{remark}
It should be possible to establish a real analogue of Theorem \ref{prop: AKIrie} for symmetric convex hypersurfaces, asserting that $2 c_{\HW}(K, \rho) = \ell_{\min}^{\rm sym}(\p K, \rho)$.
\end{remark}

Closed-open maps give the following relationship between the SH capacity and the HW capacity, which was also observed in \cite[(i) in Theorem 1.5]{BormanMclean}. We state it for general Liouville domains:

\begin{proposition}\label{prop: relationshhwcapacities}
For an admissible Lagrangian $L$ in a Liouville domain $(W, \lda)$, 
$$
c_{\HW}(W) \leq c_{\SH}(W).
$$
\end{proposition}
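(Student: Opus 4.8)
The plan is to use the filtered closed-open map $\CO^a \colon \SH^a_*(W) \to \HW^a_{*-n}(L)$ together with the commutative diagram of Theorem \ref{thm: closedopenmap}, exactly in the spirit sketched in the introduction. The key point is that $\CO$ intertwines the two maps $j^a$ coming from the tautological exact sequences, so that vanishing of $j^a[W,\p W]$ in $\SH^a_*(W)$ forces vanishing of $j^a[L,\p L]$ in $\HW^a_*(L)$, provided the class $[L,\p L]$ is hit by $[W,\p W]$ under the map $i_!$. First I would recall that the fundamental class $[W,\p W] \in \Ho_{2n}(W,\p W)$ maps under $i_! \colon \Ho_{2n}(W,\p W) \to \Ho_n(L,\p L)$ to the fundamental class $[L,\p L] \in \Ho_n(L,\p L)$; this is a standard fact about the shriek (wrong-way, Gysin) map associated to the inclusion of the Lagrangian $i \colon L \hookrightarrow W$, which is essentially Poincar\'e--Lefschetz duality applied to the inclusion, using that $L$ has half the dimension of $W$ and meets $\p W$ cleanly in $\p L$. (If one prefers to avoid $\Z$-gradings, use the ungraded version of the diagram stated at the end of Section \ref{sec: nograding}.)

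Next, fix $a > 0$ with $j^a[W,\p W] = 0$ in $\SH^a_*(W)$; such $a$ exist whenever $c_{\SH}(W) < \infty$, and if $c_{\SH}(W) = \infty$ the inequality is vacuous. Chasing the commutative square
\[
\begin{tikzcd}
\Ho_{*+n}(W, \p W) \arrow[r, "j^a"] \arrow[d, "i_!"] & \SH_*^a(W) \arrow[d, "\CO^a"] \\
\Ho_*(L, \p L)  \arrow[r, "j^a"] &  \HW_{*-n}^a(L)
\end{tikzcd}
\]
we get $j^a[L,\p L] = j^a\big(i_![W,\p W]\big) = \CO^a\big(j^a[W,\p W]\big) = \CO^a(0) = 0$ in $\HW^a_{*-n}(L)$. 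Hence every $a$ admissible for the definition of $c_{\SH}(W)$ is admissible for the definition of $c_{\HW}(W)$, and taking the infimum over such $a$ yields $c_{\HW}(W) \le c_{\SH}(W)$.

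The only genuine content beyond bookkeeping is the identification $i_![W,\p W] = [L,\p L]$, and this is where I would be most careful: one must check that the wrong-way map appearing on the left of the diagram in Theorem \ref{thm: closedopenmap} is indeed the topological Gysin map of the Lagrangian inclusion (this is what the proof of that theorem, via \cite[Theorem 1.5]{Al08} and the PSS-type maps $\psi$, provides), and that under this map the relative fundamental class of $W$ restricts to that of $L$. Since $L$ is connected with $\dim L = n$ and meets $\p W$ transversally along $\p L$, this is a standard computation in relative (co)homology; intuitively, capping the fundamental class of $W$ with the Thom class of the normal bundle of $L$ and using that $L$ is Lagrangian (so the normal bundle is $TL$, of the correct rank) gives the fundamental class of $L$. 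Everything else---finiteness conventions, the vacuous case, passing to the infimum---is immediate.
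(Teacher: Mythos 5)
Your proposal is correct and follows essentially the same route as the paper: both apply the commutative diagram of Theorem \ref{thm: closedopenmap} to the fundamental class, use that $i_![W,\p W]=[L,\p L]$, and conclude that any $a$ with $j^a[W,\p W]=0$ also satisfies $j^a[L,\p L]=0$. Your additional care about the identification of $i_!$ with the topological Gysin map and the handling of the case $c_{\SH}(W)=\infty$ are fine elaborations of points the paper treats as standard.
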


\begin{proof}
This is a direct consequence of Theorem \ref{thm: closedopenmap}. If $j^a[W, \p W] = 0$ in $\SH^{a}_*(W)$, it follows from the commutative diagram \eqref{eq: diagclosedopen}  that
$$
0 = (\CO^a \circ j^a) [W, \p W]  = (j^a \circ i_!)[W, \p W] = j^a[L, \p L]
$$
where the last equality holds because the natural map $i_!: \Ho_*(W, \p W) \rightarrow \Ho_*(L, \p  L)$ sends the fundamental class $[W, \p W]$ to the fundamental class $[L, \p L]$.  Therefore we have $a \geq c_{\HW}(W)$, and consequently we conclude $c_{\HW}(W) \leq c_{\SH}(W)$.
\end{proof}

We now obtain the desired estimate.

\begin{proof}[Proof of Theorem \ref{mainthm}]   Theorem \ref{prop: AKIrie} and Proposition \ref{prop: symmsystoleHWcapacity} tell  us that  
 $$
1 \leq  \mathfrak{R}(\p K, \rho) = \frac{\ell_{\min}^{\rm sym}(\p K, \rho) }{\ell_{\min} (\p K )} \leq  \frac{2 c_{\HW}(K, \rho)}{c_{\SH}(K)} .
$$
An application of Proposition  \ref{prop: relationshhwcapacities} to $ (K, \alpha, \rho)$ provides 
\[
\frac{ 2c_{\mathrm{HW}}(K, \rho)}{ c_{\mathrm{SH}}(K)} \leq 2,
\]
finishing the proof. 
\end{proof}

\section{Real symplectic capacities}\label{sec:examples}  
  Let $(M, \omega, \rho)$ be a real symplectic manifold, meaning that a symplectic manifold $(M, \omega)$ is  equipped with an anti-symplectic involution $\rho$, i.e.\ $\rho^* \omega = - \omega$. We always assume that $\mathrm{Fix}(\rho) \neq\emptyset$ so that it is a Lagrangian submanifold of $M$. A \emph{real symplectic embedding} $\Psi \colon (M_1, \omega_1,  \rho_1 ) \to (M_2, \omega_2,  \rho_2)$ between two  real symplectic manifolds  is an   embedding of $M_1$ into $M_2$ such that $\Psi^* \omega_2= \omega_1$ and  $\Psi^* \rho_2 = \rho_1$.  

\begin{definition}  A \textit{real symplectic capacity} is a function $c$ which assigns to a real symplectic manifold $(M, \omega, \rho)$    a number $c(M, \omega,  \rho) \in [0, + \infty]$ having the following properties:
\begin{itemize}
\item (Monotonicity) If real symplectic manifolds $ (M_1, \omega_1,  \rho_1 )$ and  $(M_2, \omega_2,  \rho_2)$  have the same dimension, and if there exists a real symplectic embedding $\Psi \colon (M_1, \omega_1,  \rho_1 ) \to (M_2, \omega_2,  \rho_2)$, then we have 
$c( M_1, \omega_1, \rho_1) \leq c(M_2, \omega_2, \rho_2)$;

\item (Conformality) $c(M,r \omega,    \rho) = r  c(M, \omega,  \rho)$ for all $ r>0$;

\item (Nontriviality) $0 < c(B^{2n}(1), \omega_0,  \rho_0)$ and $c(Z^{2n}(1), \omega_0,  \rho_0)< \infty$, where $B^{2n}(1) = \{z \in \C^{n} \;|\; ||z||^2 < 1\}$ and $Z^{2n}(1) = \{z \in \C^{n}\;|\; |z_1|^2 < 1\}$. Here $\omega_0 = d\lambda_0$ denotes the standard symplectic form on $\R^{2n}$, and $\rho_0$ denotes complex conjugation.

\end{itemize}
A real symplectic capacity  $c$ is said to be \textit{normalized} if 
\[
c(B^{2n}(1),  \omega_0,  \rho_0) = c(Z^{2n}(1), \omega_0,   \rho_0) =1.
\]

\end{definition}

\begin{remark}
The notion of real symplectic capacities was first introduced by Liu and Wang \cite{Liuwang}, where the authors referred to it as ``symmetrical'' symplectic capacities.  
\end{remark}

\begin{example}
  We provide several examples of real symplectic capacities. 
\begin{enumerate}
 \item[(i)] The \textit{real Gromov width} $c_B^{\mathrm{real}}( M, \omega, \rho) $  is defined as the supremum over all $r>0$ such that  $(B^{2n}(r),  \omega_0,   \rho_0)$  real symplectically embeds into $(M, \omega,  \rho)$. It is normalized and the smallest in the sense that if $c$ is a real symplectic capacity, then $c_B^{\mathrm{real}}(M , \omega, \rho) \leq c(M, \omega,  \rho)$ for all  real symplectic manifolds $(M, \omega, \rho)$.

\item[(ii)]  In \cite{Liuwang}, Liu and Wang constructed the    real Hofer--Zehnder capacity $c_{\mathrm{HZ}}^{\mathrm{real}}$, which is normalized,   by imitating the construction  of Hofer--Zehnder capacity  \cite{HZbook}.  Let $K \subset \R^{2n}$ be a compact convex domain invariant under a linear anti-symplectic involution $\rho$. It was shown in  \cite[Theorem 1.3]{JiLu20} that the real Hofer--Zehnder capacity of $(K, \rho)$ agrees with the symmetric systole, i.e.\ $c_{\mathrm{HZ}}^{\mathrm{real}}( K, \rho ) =  \ell_{\min}^{\mathrm{sym}}(\p K, \rho). $

\item[(iii)] Following the construction of the (first) Ekeland--Hofer capacity \cite{EHcapacity}, Jin and Lu defined the  real Ekeland--Hofer capacity   $c_{\mathrm{EH}}^{\mathrm{real}}(   \cdot  , \rho)$ for compact domains $K \subset \R^{2n}$   invariant under a fixed linear anti-symplectic involution $\rho$, see \cite{JiLu20}.   It is normalized.  Strictly speaking, it is not a real symplectic capacity as it is defined only for domain in $\R^{2n}$ and satisfies only restricted monotonicity: if $K_1 \subset K_2$ are compact domains in $\R^{2n}$ that are invariant under \emph{a fixed} linear anti-symplectic involution $\rho$, then we have $c_{\mathrm{EH}}^{\mathrm{real} }(K_1, \rho) \leq c_{\mathrm{EH}}^{\mathrm{real}}( K_2, \rho)$. Nonetheless, we  call it a real symplectic capacity.  For a compact convex domain $K \subset \R^{2n}$ invariant under a linear anti-symplectic involution $\rho$,   it agrees with the symmetric systole of $(\p K, \rho)$.  Consequently, for every symmetric convex domain $(K, \rho)$ with $\rho$ being linear,  the real Hofer--Zehnder capacity and the real Ekeland--Hofer capacity agree, see  \cite[Theorem 1.10]{JiLu20}.  
   
\item[(iv)] Let $(W, \lambda, \rho)$ be a real Liouville domain, i.e.\ $(W, \lambda)$ is a Liouville domain equipped with an exact anti-symplectic involution $\rho$. The wrapped Floer homology capacity  $c_{\mathrm{HW}}(W, \lambda, \rho)$, constructed using  wrapped Floer homology,   satisfies restricted monotonicity, meaning that if there exists a generalized real Liouville embedding from  $(W_1, \lambda_1, \rho_1)$ into  $(W_2, \lambda_2, \rho_2)$, then $c_{\mathrm{HW}}(W_1 , \lambda_1, \rho_1) \leq c_{\mathrm{HW}}(W_2, \lambda_2, \rho_2)$. See Section \ref{sec:HWcap} for the construction. Recall that a generalized   real symplectic embedding is a real symplectic embedding $\varphi \colon (W_1, \lambda_1, \rho_1) \to (W_2, \lambda_2, \rho_2)$ such that    $(\varphi^*\lda_2 - \lda_1)|_{\p W_1} = 0$ in $\Ho^1(\p W_1)$. In particular, if $W_1$ and $W_2$ are both starshaped domains in $\R^{2n}$, every  real symplectic embedding is a generalized  real Liouville embedding since $\Ho^1(S^{2n-1}) = 0$ for $n \geq 2$.  We expect that the argument of \cite{Irie} applies to all compact convex domains $K \subset \R^{2n}$ invariant under a linear anti-symplectic involution $\rho$, implying that $c_{\mathrm{HW} }(K, \rho) = \ell_{\min}^{\mathrm{sym}}(\p K, \rho)$.

\item[(v)]  Analogously to Gutt--Hutchings  \cite{GH}, we can construct, using positive equivariant wrapped Floer homology defined in \cite{KKK}, a sequence of real symplectic capacities $c_1 \leq c_2 \leq \cdots \leq \infty$ for real Liouville domains. They satisfy all the conditions for real symplectic capacities, but the monotonicity. Instead, they satisfy the restricted monotonicity as the wrapped Floer homology capacity. Using a Gysin-type exact sequence in wrapped Floer theory (see \cite[Proposition 3.27]{KKK} and \cite[Proposition 2.9]{BO_equiv}), it is not hard to see that  $c_1(W, \lambda,\rho) \leq c_{\mathrm{HW}}(W, \lambda, \rho)$ for every real Liouville domain $(W, \lambda, \rho)$.

 \end{enumerate}
 
   \end{example}

There is an old question about symplectic capacities asking if all normalized symplectic capacities agree on  compact convex domains in $\R^{2n}$, see \cite[Section 14.9, Problem 53]{MS17book} and \cite[Section 5]{Yaron}. We finish this article with the following related conjecture.  

\begin{conjecture}
For   convex domains in $\R^{2n}$ invariant under a fixed linear anti-symplectic involution, all normalized symplectic capacities and normalized real symplectic capacities are the same.
\end{conjecture}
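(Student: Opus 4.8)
The plan is to break the conjecture into three ingredients and attack each with the machinery recalled above. Fix a convex domain $K\subset\R^{2n}$ invariant under a linear anti-symplectic involution $\rho$. The first ingredient is the non-real statement: every normalized symplectic capacity $c$ satisfies $c(K)=\ell_{\min}(\p K)$. The second is its real counterpart: every normalized real symplectic capacity $c^{\mathrm{real}}$ satisfies $c^{\mathrm{real}}(K,\rho)=\ell_{\min}^{\mathrm{sym}}(\p K,\rho)$. The third, glueing ingredient is the identity $\ell_{\min}(\p K)=\ell_{\min}^{\mathrm{sym}}(\p K,\rho)$, i.e.\ that the symmetric ratio of a linearly symmetric convex hypersurface equals exactly one; Theorem \ref{mainthm} already gives $\mathfrak{R}(\p K,\rho)\le 2$, and the examples of Section \ref{sec: Examples} confirm the value $1$ for ellipsoids and for convex toric domains. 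Granting the first two squeezes, the conjecture is \emph{equivalent} to this third identity, so the three ingredients together are exactly what has to be proved.

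For the first ingredient I would sandwich an arbitrary normalized capacity between the Gromov width and the cylindrical capacity, $c_B(K)\le c(K)\le c_Z(K)$, and invoke the cluster of coincidences already available on the convex class: $c_{\SH}(K)=\ell_{\min}(\p K)$ by Abbondandolo--Kang \cite{AbonKang} and Irie \cite{Irie} (Theorem \ref{prop: AKIrie}), the classical equalities $c_{\mathrm{EH}}(K)=c_{\mathrm{HZ}}(K)=\ell_{\min}(\p K)$ from \cite{EHcapacity,HZbook}, and the Gutt--Hutchings values $c_1(K)=\ell_{\min}(\p K)$ from \cite{GH}. Thus the remaining content is precisely the strong Viterbo conjecture $c_B(K)=c_Z(K)$ on convex domains; I would try to import the cases where it is known (Lagrangian products, $S^1$-invariant convex domains, monotone toric domains in low dimension) and hope that the linear symmetry hypothesis gives extra leverage. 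This step is not routine: it is an open problem in its own right, recorded in \cite[Section 14.9, Problem 53]{MS17book} and \cite[Section 5]{Yaron}.

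For the second ingredient I would first establish the real analogue of Theorem \ref{prop: AKIrie}, namely $2c_{\HW}(K,\rho)=\ell_{\min}^{\mathrm{sym}}(\p K,\rho)$ for symmetric convex $K$, by adapting Irie's holomorphic-curve argument to the equivariant wrapped setting (this is exactly the expectation voiced after Proposition \ref{prop: symmsystoleHWcapacity} and in Example (iv) of Section \ref{sec:examples}). Combined with the Jin--Lu identities $c_{\mathrm{HZ}}^{\mathrm{real}}(K,\rho)=c_{\mathrm{EH}}^{\mathrm{real}}(K,\rho)=\ell_{\min}^{\mathrm{sym}}(\p K,\rho)$ from \cite{JiLu20} and the extremality of the real Gromov width among real capacities, one again reduces to squeezing an arbitrary normalized real capacity from below by $c_B^{\mathrm{real}}$ and from above by a normalized upper real capacity, i.e.\ to a real version of the strong Viterbo conjecture.

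Finally, even with both squeezes in hand, the conjecture collapses onto $\mathfrak{R}(\p K,\rho)=1$. Theorem \ref{mainthm} and Proposition \ref{prop: relationshhwcapacities} only give $\ell_{\min}^{\mathrm{sym}}(\p K,\rho)\le 2\ell_{\min}(\p K)$, so the sharp equality must come from genuinely new input: for instance an Akopyan--Karasev-type symmetry principle \cite{AkKa17} showing that a closed characteristic of minimal action on a linearly symmetric convex body can always be chosen $\rho$-symmetric, or a direct variational/filling argument forcing the systolic orbit of a convex $K$ to meet $\Fix(\rho)$. I expect this equality, together with the strong Viterbo conjecture and its real form, to be the real obstacle: each of the three ingredients is currently open, and the stated conjecture is precisely their simultaneous strengthening, so a complete proof seems out of reach with present techniques — the realistic goal is to verify it on enlarged classes (toric, $S^1$-invariant, Lagrangian-product) where all three ingredients can be checked by hand.
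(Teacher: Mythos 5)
The statement you were asked to prove is stated in the paper as a \emph{conjecture}; the paper offers no proof of it, and indeed poses it precisely because it is open. Your proposal is therefore not comparable to a proof in the paper --- there is none --- and, as you yourself acknowledge in the final paragraph, it is not a proof at all but a reduction of the conjecture to three sub-statements, each of which is currently open: the strong Viterbo conjecture $c_B(K)=c_Z(K)$ on convex domains, a real analogue of it for normalized real capacities, and the identity $\mathfrak{R}(\p K,\rho)=1$ for linearly symmetric convex hypersurfaces. This decomposition is sensible and consistent with everything the paper establishes (Theorem \ref{mainthm} gives only $\mathfrak{R}\le 2$; Theorem \ref{prop: AKIrie}, the Jin--Lu identities, and Propositions \ref{prop: symmsystoleHWcapacity} and \ref{prop: relationshhwcapacities} supply exactly the partial coincidences you cite), and your observation that the conjecture forces $\ell_{\min}^{\mathrm{sym}}(\p K,\rho)=\ell_{\min}(\p K)$ is the right way to see why it strengthens the classical question of \cite[Section 14.9, Problem 53]{MS17book}. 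But none of the three ingredients is supplied, so the gap is total: what you have written is an accurate assessment of the state of the art, not an argument. The honest conclusion --- which you reach --- is that the statement remains a conjecture, verifiable at present only on restricted classes such as symmetric ellipsoids and convex toric domains where the paper's Section \ref{sec: Examples} already computes $\mathfrak{R}=1$ and where all normalized capacities are known to agree.
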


\bibliographystyle{abbrv}
\bibliography{mybibfile}

\end{document}